\documentclass[12pt]{amsart}
\usepackage{latexsym}
\usepackage{amssymb}
\usepackage{amsmath}


\baselineskip = 12 pt
\parindent=20pt
\jot=6pt

\newtheorem{Thm}[subsection]{Theorem}
\newtheorem{Lemma}[subsection]{Lemma}
\newtheorem{Rem}[subsection]{Remark}

\newtheorem{Cor}[subsection]{Corollary}
\numberwithin{equation}{section}

\newcommand{\ben}{\begin{enumerate}}
\newcommand{\een}{\end{enumerate}}

\newcommand{\bec}{\begin{center}}
\newcommand{\eec}{\end{center}}

\newcommand{\beq}{\begin{equation}}
\newcommand{\eeq}{\end{equation}}

\newcommand{\bdm}{\begin{displaymath}}
\newcommand{\edm}{\end{displaymath}}

\newcommand{\R}{\mathbb{R}}

\newcommand{\N}{\mathbb{N}}

\newenvironment{proofof}[1]{{\sc Proof of #1}}{\quad\lower0.05cm\hbox{$\square$}\medskip}

\newcommand{\bgp}{\bigskip}

\title[Kernel Estimates]
{Global Estimates for Kernels of Neumann Series and Green's Functions}

\author{Michael Frazier}
\address{Mathematics Department, University of Tennessee, Knoxville, \newline\noindent 
Tennessee 37922} \email{frazier@math.utk.edu}

\author{Fedor Nazarov}
\address{Mathematics Department, Kent State University, Kent, Ohio
44242}\email{nazarov@math.kent.edu}

\author{Igor E. Verbitsky}
\address{Department of Mathematics, University of Missouri, Columbia, Missouri 65211}
\email{verbitskyi@missouri.edu}

\thanks{The second author is supported in part
by NSF grant DMS-1249196.  The third author is supported in part by
NSF grant DMS-1161622.}

\subjclass[2010]{Primary 42B20, 42B37. Secondary 60J65, 81Q15}

\keywords{Neumann series, Green's function, 
quasi-metric kernels, Schr\"{o}dinger
operators, non-tangentially accessible domains, conditional gauge}


\begin{document}

\begin{abstract}
We obtain global pointwise estimates for kernels of the resolvents $(I-T)^{-1}$ 
of integral operators 
\[Tf(x) = \int_{\Omega} K(x, y) f(y) d \omega(y)\]  
on $L^2(\Omega, \omega)$ under the assumptions that $||T||_{L^2(\omega) \rightarrow L^2 (\omega)} <1$ and $d(x,y)=1/K(x,y)$ is a quasi-metric.  Let $K_1=K$ and $K_j(x,y) = \int_{\Omega} K_{j-1} (x,z) K(z,y) \, d \omega (z)$ for $j \geq 1$. Then
\[  K(x,y) e^{c K_2 (x,y)/K(x,y)}  \leq \sum_{j=1}^{\infty} K_j(x,y) \leq   K(x,y) e^{C K_2 (x,y)/K(x,y)} , \]
for some constants $c,C>0$.

Our estimates yield matching bilateral bounds for Green's functions of the 
fractional Schr\"{o}dinger operators $(-\triangle)^{\alpha/2}-q$ 
with arbitrary nonnegative potentials $q$ on $\R^n$ for 
$0<\alpha<n$, or on a bounded non-tangentially accessible domain $\Omega$ for $0<\alpha \le 2$. 
In probabilistic language, these results can be reformulated as explicit bilateral bounds for the conditional gauge 
associated with Brownian motion or $\alpha$-stable L\'evy processes. 
 
\end{abstract}

\maketitle \vfill

\eject

\tableofcontents

\section{Introduction}

This paper is dedicated to bounds for kernels of resolvents
\newline$(I-T)^{-1} $ of integral operators 
\begin{equation}
 Tf(x) = \int_{\Omega} K(x,y) f(y) \, d \omega (y) \label{defT}
\end{equation}
and their applications to estimates for Green's functions of Schr\"{o}dinger operators and related quantities.  Throughout, $\omega$ is a positive measure on $\Omega$. 

We consider the formal Neumann series
\[ (I-T)^{-1} = I + \sum_{j=1}^{\infty} T^j \]
and the associated kernels $K_1=K$ and
\begin{equation}
 K_j(x,y) = \int_{\Omega} K_{j-1} (x,z) K(z,y) \, d \omega (z) \label{defkj}
\end{equation}
for $j \geq 2$, of the operators $T^j$.  Define the formal Green's
function $H: \Omega \times \Omega \rightarrow (0, +\infty]$ by
\[ H (x,y) = \sum_{j=1}^{\infty} K_j (x,y) . \]
Let $\Vert T \Vert =\Vert T \Vert_{L^2(\omega) \rightarrow L^2
(\omega)}$ denote the operator norm of $T$ on $L^2 (\omega)$.

We will consider the class of \textit{quasi-metric} kernels, which have been considered previously in several papers, for example \cite{KV} and \cite{H}.
A quasi-metric kernel $K$ on a measure space $(\Omega, \omega)$ is a measurable function from $\Omega \times \Omega$ into $(0, \infty]$ such that

(i) $K$ is symmetric: $K(x,y) = K(y,x)$ for all $x,y \in \Omega$, 

and 

(ii) $d = 1/K$ satisfies the quasi-triangle inequality 
\begin{equation}\label{quasimetricdef} d(x,y) \leq \kappa
(d(x,z) + d(z,y) )
\end{equation}
for all $x,y,z \in \Omega$, for some $\kappa > 0$, called the quasi-metric constant for $K$. 

Our main theorem is the following.

\begin{Thm}\label{keythm} Let $(\Omega, \omega)$ be a
$\sigma$-finite measure space.  Let $K$ be a quasi-metric kernel on $\Omega$.
Suppose $\Vert T \Vert <1$. Then there exists $c = c(\kappa) >0$ and $C= C(\kappa, \Vert T \Vert)>0$ such that
\begin{equation}
   K(x,y) e^{c K_2 (x,y)/K(x,y)}  \leq H(x,y) \leq   K(x,y) e^{C K_2 (x,y)/K(x,y)}. \label{mainineq}
\end{equation}
\end{Thm}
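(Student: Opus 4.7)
The strategy is to establish, by induction on $j$, matching two-sided bounds of the form
\[
c_0\, K(x,y) \frac{(c_1 A(x,y))^{j-1}}{(j-1)!} \le K_j(x,y) \le C_0\, K(x,y) \frac{(C_1 A(x,y))^{j-1}}{(j-1)!}
\]
where $A(x,y) := K_2(x,y)/K(x,y)$, and then to sum in $j$. Summation of the upper bound yields $H \le C_0 K\, e^{C_1 A}$, and of the lower bound, $H \ge c_0 K\, e^{c_1 A}$; the leading constants $c_0, C_0$ are absorbed into the exponentials by treating $A$ small and $A$ large separately (for the lower bound $H\ge K_1$ already suffices when $A$ is bounded, and for the upper bound the first few terms dominate). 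The base cases $j=1,2$ hold by definition.

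For the upper inductive step, use
\[
K_{j+1}(x,y) = \int_\Omega K(x,z)\, K_j(z,y)\, d\omega(z) \le \frac{C_1^{j-1}}{(j-1)!}\int_\Omega K(x,z)K(z,y)\, A(z,y)^{j-1}\, d\omega(z).
\]
Normalizing by $K(x,y) A(x,y)^j$ and introducing the probability measure $d\mu_{x,y}(z) := K(x,z)K(z,y)/K_2(x,y)\, d\omega(z)$, the problem reduces to the moment estimate
\[
\mathbb{E}_{\mu_{x,y}}\!\Bigl[(A(Z,y)/A(x,y))^{j-1}\Bigr] \le C_1/j.
\]
The quasi-triangle inequality gives the ``three-$K$'' bound $K(x,z)K(z,y) \le \kappa K(x,y)[K(x,z)+K(z,y)]$, which localizes the mass of $\mu_{x,y}$ in the quasi-metric ``tube'' between $x$ and $y$. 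On this tube, $A(\cdot,y)$ is controlled by $A(x,y)$, and the assumption $\Vert T\Vert<1$ provides the strict contraction needed to extract the $1/j$ decay of moments.

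The lower inductive step is handled analogously: one restricts the integration to an explicit subset of the tube on which $K(x,z), K(z,y)$, and $A(z,y)$ are all pointwise comparable to $K(x,y), K(x,y)$, and $A(x,y)$ respectively. The combinatorial factor $1/(j-1)!$ arises from the ordering of the $j-1$ intermediate variables along the tube.

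The principal obstacle in both directions is the clean extraction of the factorial $1/(j-1)!$. A naive induction using only the three-$K$ inequality loses a multiplicative constant per step and produces only a geometric (resolvent-type) bound, not the desired exponential. The factorial has to be earned by combining the geometric localization of $\mu_{x,y}$ in the tube with the $L^2$-contraction $\Vert T\Vert<1$ and by carefully tracking all quasi-metric constants through each iterate.
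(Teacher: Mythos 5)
Your plan hinges on a term-by-term bound $K_j(x,y)\le C_0\,K(x,y)\,\bigl(C_1 A(x,y)\bigr)^{j-1}/(j-1)!$ with $A=K_2/K$, reduced to the moment estimate $\mathbb{E}_{\mu_{x,y}}\bigl[(A(Z,y)/A(x,y))^{j-1}\bigr]\le C_1/j$. This step is false, and no choice of constants can repair it: the factorial decay in $j$ simply does not hold for the individual iterates. Take $\Omega=\{1,2\}$ with unit point masses and $K\equiv t$ with $0<t<1/2$; then $d\equiv 1/t$ is a quasi-metric ($\kappa=1/2$), $\Vert T\Vert=2t<1$, $K_2\equiv 2t^2$, so $A\equiv 2t$, while $K_j=2^{j-1}t^j=K\,A^{j-1}$ exactly. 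Thus $K_j/K$ decays only geometrically, your claimed bound would force $(j-1)!\le C_0C_1^{j-1}$, and in your moment estimate $A(Z,y)/A(x,y)\equiv 1$, so $\mathbb{E}[1]\le C_1/j$ fails for large $j$. The point your last paragraph gestures at is exactly the obstruction: $\Vert T\Vert<1$ can only ever buy geometric decay in $j$, never a $1/j$ or factorial gain per step, so an induction that tries to attach $1/(j-1)!$ to every intermediate point cannot close. The theorem survives because the exponential bound concerns the sum $H=\sum_j K_j$, not each term; in the example $\sum_j K_j=K/(1-2t)\le K e^{CA}$ with $C=C(\Vert T\Vert)$.

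The correct structure (and the one the paper uses) attaches the factorial only to a selected set of indices along the chain $z_1,\dots,z_{j-1}$, and handles the remaining indices geometrically. Concretely, with $F(z)=D(x,z)/D(y,z)$ for an equivalent power-of-quasi-metric $D$, one sets $\Phi(m)=\min_{k\ge m}F(z_k)$ and marks the set $M$ of indices where $\Phi$ jumps by a fixed factor $\tau^2$; an ``inverse Ptolemy'' inequality converts each jump into a factor $d(x,y)/\bigl(d(x,z_{m+1})d(y,z_m)\bigr)$ up to constants, while the stretches between jumps contribute $\tau^{2(j-2-|M|)}$ and are absorbed using $T^k f\le\alpha^{-k}Sf$ with $S=\sum_k\alpha^kT^k$, $1<\alpha<\Vert T\Vert^{-1}$ (this is where $\Vert T\Vert<1$ enters, and only here). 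Integrating and symmetrizing over the ordering of the $|M|$ marked variables gives $1/|M|!$ times $\bigl(\Vert S\Vert K_2(x,y)\bigr)^{|M|+1}$, and the exponential emerges only after summing over $j$ and $|M|$ via $\sum_j\binom{j}{\ell}\rho^{j-\ell}=(1-\rho)^{-\ell-1}$. Your lower bound sketch is closer in spirit to the paper's (ordering the chain and earning $1/(j-1)!$ by symmetry), but restricting to a region where $K(x,z),K(z,y)$ are comparable to $K(x,y)$ would only recover part of the $K_2$ mass; the paper instead orders by $F$ over all of $\Omega^{j-1}$ and uses the Ptolemy inequality $d(z_m,z_{m+1})d(x,y)\le 4\kappa^2 d(x,z_{m+1})d(y,z_m)$, so that the full $K_2(x,y)^{j-1}$ survives. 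Note also that the lower bound needs no hypothesis on $\Vert T\Vert$, which your comparability-plus-contraction framing obscures.
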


It is well-known (see Lemma \ref{normTbig}) that if $\Vert T \Vert >1$, then 
$H(x,y) = +\infty$ for all $x$ and $y$.  In the critical case $\Vert T \Vert =1$, the lower
bound still holds, but there are examples where $H$ is finite a.e. and also examples where $H = + \infty$ a.e., 
although $K_2$ is finite a.e. 

Kernels of the form $K(x,y) = \sum_{Q} c_Q \chi_Q (x) \chi_Q(y)$, where the sum is over all dyadic cubes in $\R^n$, were considered in \cite{FV1}, in connection with a discrete model of the Schr\"{o}dinger equation (see Remark \ref{RemarkFV1}).  Such kernels are quasi-metric with quasi-metric constant $1$.  Estimates of the form of inequality (\ref{mainineq}) were obtained in \cite{FV1}, under a Carleson condition on the sequence of scalars $\{ c_Q \}$. (A sharp constant in the Carleson condition is established below; see Remark \ref{RemarkFV1}.)  

In \cite{FV}, estimate (\ref{mainineq}) and  (\ref{lowschrGest}), (\ref{upschrGest}) below were obtained under stronger
assumptions.

Estimate (\ref{mainineq}) immediately extends (see Corollary \ref{modifiableest}) to the more general class of {\it quasi-metrically modifiable} kernels. A map $K: \Omega \times \Omega \rightarrow (0, \infty]$ is quasi-metrically modifiable with constant $\kappa$ if there exists a measurable function $m: \Omega \rightarrow (0, \infty) $ such that $\widetilde{K}(x,y)=K(x,y)/(m(x) m(y))$ is a quasi-metric kernel on $\Omega$ with quasi-metric constant $\kappa$.  We call $m$ a  modifier of $K$.

Our main application is to the
fractional Schr\"{o}dinger operator
\[ \mathcal{L}_{\alpha} = (-\triangle )^{\alpha/2} - q \]
with nonnegative potential $q \in L^1_{loc}(\Omega)$ in some (possibly unbounded) domain
$\Omega \subseteq \R^n$.  Let $G(x,y)=G^{(\alpha)}(x,y)$ be the Green's kernel associated with the fractional Laplacian $(-\Delta)^{\alpha/2} $ on $\Omega$  (see \cite{L}, \cite{BBK}, \cite{H} for references and definitions).  
We note that $G(x,y)$ is non-negative and symmetric on $\R^n\times \R^n$, and $G(x, y) = 0$ if $x \in ({\overline{\Omega}})^c$, $y \in \R^n$. For regular domains $\Omega$, this is true if $x \in \Omega^c$. For the sake of simplicity we will assume throughout 
the paper that domains 
$\Omega$ are open and connected, so that $G(x, y)>0$ in $\Omega \times \Omega$, although most estimates remain true without the connectedness assumption. 

By $Gf$ we denote the 
corresponding Green potential operator, that is, 
\[ Gf(x) = \int_{\Omega} G(x,y) f(y) \, dy , \quad x \in \Omega. \] 
For appropriate $f$ and $\Omega$, we have $Gf = 0$ in $\Omega^c$ and $(-\triangle )^{\alpha/2} Gf =f$ in $\Omega$.   
More generally, 
\[ G \mu(x) = \int_{\Omega} G(x,y)  \, d\mu(y) , \quad x \in \Omega, \] 
where $\mu$ is a Borel measure on $\Omega$. 

Let $q$ be a non-negative, locally integrable function on $\Omega$. Let 
\[ d \omega(x) = q(x) dx . \]
Let $G_1 =G$ and define $G_j$ inductively for $j \geq 2$ by
\[ G_j (x,y) = \int_{\Omega} G_{j-1} (x,z) G(z,y) \, d \omega (z). \]

The minimal Green's function associated with the fractional Schr\"{o}\-dinger operator 
$ \mathcal{L}_{\alpha} = (-\triangle)^{\alpha/2}-q$  is 
\begin{equation}
 \mathcal{G} (x,y) = \sum_{j=1}^{\infty} G_j (x,y). \label{defscriptG}
\end{equation}  
The corresponding Green's operator is 
\[ \mathcal{G}f(x)  = \int_{\Omega} \mathcal{G}(x,y) f(y) \, dy .\]  

Formally, $u = \mathcal{G}f$ is the solution of the integral equation  
\[ u(x)  =   \int_{\Omega} G (x,y) \,  u(y) \, d \omega(y) + G f (x), 
\quad x \in \Omega, \quad a.e. \, \, \text{in} \, \, \Omega, \]
and hence, by applying $G$, to the Schr\"{o}dinger equation $$\mathcal{L}_{\alpha} u = (-\triangle )^{\alpha/2}u - qu = f .$$
 
Theorem \ref{mainschrothm} below, which yields estimates for $\mathcal{G}$ like those for $H$ in Theorem \ref{keythm}, is applicable in the following cases. 

(1) If $\Omega = \R^n$ and $0<\alpha<n$, then $G$ is the classical Riesz kernel $G^{(\alpha)}(x,y) = c_{n, \alpha} |x-y|^{\alpha -n}$, which is a quasi-metric kernel.

(2)  If $\Omega$ is a ball or half-space, then  for all $0<\alpha<n$,  $G=G^{(\alpha)}$ is a quasi-metrically modifiable 
kernel with modifier $m(x)=\delta(x)^{\alpha/2}$, where $\delta(x)$ is the distance from $x$ to the boundary
$\partial \Omega$. This is easy to see from the concrete form of Green's kernel in these cases.

(3) If $\Omega$ is a bounded domain with $C^{1,1}$ boundary then 
 \begin{equation}\label{green-smootha}
G^{(\alpha)}(x,y) \approx \frac{\delta(x)^{\alpha/2} \, \delta(y)^{\alpha/2}}{|x-y|^{n - \alpha} (|x-y| + \delta (x) + \delta(y))^{{\alpha/2}} },  
\end{equation}
where ``$\approx$" means that the ratio of the two sides is bounded above and below by positive constants depending only on $\Omega$, holds for $0<\alpha \le 2$, $\alpha<n$. Hence $G=G^{(\alpha)}$ is a quasi-metrically modifiable kernel with modifier $m(x)=\delta(x)^{\alpha/2}$. 
 
(4) If $\Omega$ is a bounded domain satisfying the boundary Harnark principle and $0< \alpha \le 2$, then $G=G^{(\alpha)}$ is quasi-metrically modifiable with modifier $m(x) = \min (1, G(x, x_0))$, for $x_0 \in \Omega$, with a quasi-metric constant $\kappa$ independent of $x_0$.  In particular, this procedure is applicable when $\Omega$ is a bounded Lipschitz domain, or more generally an NTA (non-tangentially accessible) domain. In fact, for $0<\alpha<2$, it suffices to assume that $\Omega$ is merely an interior NTA domain which obeys the interior corkscrew condition. This class of $\Omega$ coincides with the class of  uniform (or $\kappa$-fat) domains. See \cite{An2}, \cite{BBK}, \cite{H}, \cite{K},  \cite{FV}, p. 118, for references and further discussion.  

For the examples just listed, the following bilateral estimate follows immediately from the extension of Theorem \ref{keythm} to the case of quasimetrically modifiable kernels.  Our upper estimate is new even in the classical case $\alpha=2$; the lower estimate is known for $0<\alpha \le 2$ in the cases (1)-(4) discussed above (see \cite{GH}, and the literature cited there).  

\begin{Thm}\label{mainschrothm}
Let $\Omega \subseteq \R^n$, $n \ge 2$.  Assume that the Green's kernel $G$ for $(- \Delta)^{\alpha/2}$ on $\Omega$ is quasi-metrically modifiable. Let $q \in L^1_{loc} (\Omega)$ be non-negative, and set $d\omega = q dx$.  Define $\mathcal{G}$ by (\ref{defscriptG}). Then there exists a positive constant $c= c ( \Omega, \alpha)$ such that
\begin{equation} \mathcal G (x,y) \geq  G (x,y)  e^{\,c \,G_2 (x,y)/G (x,y) }. \label{lowschrGest}
\end{equation}
If, in addition, $\Vert T \Vert < 1$, where $T$ is the operator \newline \noindent $Tf(x) = \int_{\Omega} G(x,y) f(y) d\omega(y)$, then there exists a positive constant $C= C(\Omega, \alpha, \Vert T \Vert)$ such that
\begin{equation}
\mathcal G (x,y) \leq  G (x,y)  e^{\,C \,G_2 (x,y)/G (x,y) }.  \label{upschrGest}
\end{equation}
\end{Thm}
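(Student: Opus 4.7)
The plan is to reduce Theorem \ref{mainschrothm} to Theorem \ref{keythm} via the quasi-metric modifiability of $G$; this reduction is the content of Corollary \ref{modifiableest}, which I would carry out explicitly. Let $m$ be a modifier for $G$, so that $\widetilde G(x,y) := G(x,y)/(m(x)m(y))$ is a quasi-metric kernel with constant $\kappa$, and introduce the auxiliary measure $d\widetilde\omega(z) := m(z)^2 q(z)\, dz$ together with the operator $\widetilde T f(x) := \int_\Omega \widetilde G(x,y) f(y)\, d\widetilde\omega(y)$. A brief induction on $j$, in which the factor $m(z)^{-2}$ coming from $\widetilde G(x,z)\widetilde G(z,y)$ cancels against the $m(z)^2$ in $d\widetilde\omega$, gives $\widetilde G_j(x,y) = G_j(x,y)/(m(x) m(y))$ for every $j \geq 1$. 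Summing yields $\widetilde{\mathcal G}(x,y) = \mathcal G(x,y)/(m(x) m(y))$, and the ratio $\widetilde G_2/\widetilde G$ coincides with $G_2/G$.

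Next, I would observe that $f \mapsto mf$ is an isometric isomorphism $L^2(\widetilde\omega) \to L^2(\omega)$ satisfying $m(x)\cdot \widetilde T f(x) = T(mf)(x)$; hence it intertwines $\widetilde T$ with $T$ and $\|\widetilde T\|_{L^2(\widetilde\omega)} = \|T\|_{L^2(\omega)}$. When $\|T\| < 1$, Theorem \ref{keythm} applied to $\widetilde G$ on $(\Omega, \widetilde\omega)$ produces a two-sided bound for $\widetilde{\mathcal G}$ which, after multiplication by $m(x)m(y)$ and use of $\widetilde G_2/\widetilde G = G_2/G$, is exactly (\ref{upschrGest}) together with (\ref{lowschrGest}) in this regime.

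To remove the norm hypothesis from the lower estimate I exploit the fact, visible in Theorem \ref{keythm}, that the lower constant $c = c(\kappa)$ does not depend on $\|T\|$. If $\|T\| > 1$ then $\|\widetilde T\| > 1$ as well, so by the remark following Theorem \ref{keythm} we have $\widetilde{\mathcal G} \equiv +\infty$, hence $\mathcal G \equiv +\infty$ and (\ref{lowschrGest}) is trivial. Otherwise, replace $q$ by $q_t := tq$ with $t \in (0,1)$: the associated operator is $T_t = tT$ with norm $t\|T\| < 1$, and the iterates satisfy $G_{j,t} = t^{j-1} G_j$, so the preceding paragraphs give
\begin{equation*}
\mathcal G_t(x,y) \geq G(x,y)\, \exp\bigl(c\,t\, G_2(x,y)/G(x,y)\bigr)
\end{equation*}
with the same $c$. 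Since every $G_j$ is non-negative, $\mathcal G_t \uparrow \mathcal G$ pointwise as $t \uparrow 1$, and (\ref{lowschrGest}) follows by passage to the limit. The principal bookkeeping hazard is in the modification step: one has to arrange precisely the right power of $m$ in $d\widetilde\omega$ so that the iterates scale homogeneously and the ratio $G_2/G$ is invariant. Once this is set up correctly, the reduction and the scaling limit are essentially mechanical.
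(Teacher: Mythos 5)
Your argument is correct and, in its core, is exactly the paper's proof: Theorem \ref{mainschrothm} is obtained by passing to $\widetilde G(x,y)=G(x,y)/(m(x)m(y))$ and $d\widetilde\omega=m^2\,d\omega$, using the identities $\widetilde G_j(x,y)=G_j(x,y)/(m(x)m(y))$, $\widetilde G_2/\widetilde G=G_2/G$, and $\Vert\widetilde T\Vert_{L^2(\widetilde\omega)}=\Vert T\Vert_{L^2(\omega)}$ (the unitary $f\mapsto mf$), and then invoking Theorem \ref{keythm}; this is precisely Corollary \ref{modifiableest} together with properties (a)--(e) of Section \ref{quasimetric}. The one place you take a different route is the lower bound (\ref{lowschrGest}) without the hypothesis $\Vert T\Vert<1$: you recover it by the dilation $q\mapsto tq$, $t\uparrow 1$, using $G_{j,t}=t^{j-1}G_j$, monotone convergence $\mathcal G_t\uparrow\mathcal G$, the uniformity of $c=c(\kappa)$ in $t$, and Lemma \ref{normTbig} (via the remark after Theorem \ref{keythm}) to dispose of $\Vert T\Vert>1$. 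This is a valid workaround, but it is not needed: the paper's lower estimate, Lemma \ref{keylowest}, and hence the lower half of Corollary \ref{modifiableest}, is proved with no assumption whatsoever on $\Vert T\Vert$ (the proof never uses the norm), so (\ref{lowschrGest}) follows directly for arbitrary nonnegative $q\in L^1_{loc}(\Omega)$. Your scaling detour costs nothing beyond a few lines and would only need the trivial remark that $(\Omega,\widetilde\omega)$ is still $\sigma$-finite so that Theorem \ref{keythm} applies; the upper bound (\ref{upschrGest}) you obtain exactly as the paper does.
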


When $\alpha =2$, there is a precise probabilistic
formula 
\begin{equation}\label{prob}
 \mathcal{G} (x,y) / G(x,y) = {E}_{x,y} \left [ e^{\int_0^{\zeta} q(X_t) \, dt} \right ], 
 \end{equation} 
where $X_t$ is the Brownian path, with properly
rescaled time, starting at $x$, and ${E}_{x,y} $ is the conditional
expectation conditioned on the event that $X_t$ hits $y$ before
exiting $\Omega$, and $\zeta$ is the time when $X_t$ first
hits $y$.  The expression ${E}_{x,y} \left [e^{\int_0^{\zeta} q(X_t) \, dt}\right ]$  is
called the conditional gauge, or the Feynman-Kac functional 
of the conditioned process (see \cite{AS}, \cite{CZ}). Recently, similar formulas have been established  in the case $0< \alpha <2$ for the conditional gauge
associated with an $\alpha$-stable L\'evy process (see \cite{BBK}). However, our approach is more general and covers even some cases with $\alpha >2$ (in particular, $\Omega = \R^n$) for which there seems to be no probabilistic interpretation. 

This probabilistic approach  yields the lower bound 
(\ref{lowschrGest}) with $c=1$, by applying Jensen's inequality in (\ref{prob}).  On the other hand, the upper estimate (\ref{upschrGest}), which can be rewritten as 
\begin{equation}
{E}_{x,y} \left[e^{\int_0^{\zeta} q(X_t) \, dt}\right ] \leq  e^{C 
\,{E}_{x,y} \left[\int_0^{\zeta} q(X_t) \, dt\right ]},
\end{equation} 
seems to be new. It would be interesting to see if it has a probabilistic proof.  

Some nonlinear analogues of Theorem \ref{mainschrothm} for quasilinear equations of the $p$-Laplace
type with natural growth terms are obtained in \cite{JV1}, \cite {JV2}. However, they are
less precise and do not determine sharp constants in the conditions on $q$.

\vspace{0.1in}

In Section \ref{neumannseries}, we prove Theorem \ref{keythm}.  We discuss further results concerning integral operators with quasi-metric or quasi-metrically modifiable kernels in Section \ref{quasimetric}. 

\bgp

\section{Estimates for Kernels of Neumann Series}\label{neumannseries}

\begin{Lemma}\label{normTbig}  Let $(\Omega, \omega)$ be a $\sigma$-finite measure space, and let $K: \Omega \times \Omega \rightarrow (0, \infty] $ be a symmetric kernel on $\Omega$.  Define $T$ by (\ref{defT}). Let $K_1 =K$ and define $K_j$ by (\ref{defkj}) for $j \geq 2$. If $\Vert T \Vert > 1$, then for every 
$x, \,  y \in \Omega$, 
\[  H(x,y) = \sum_{j=1}^{\infty} K_j (x,y) = + \infty.\]
\end{Lemma}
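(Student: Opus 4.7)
My plan is to produce a nonnegative test function $\phi \in L^2(\omega)$, strictly positive pointwise on $\Omega$, along which the integrated Neumann sum diverges, and then to use the pointwise positivity $K > 0$ to transfer this integrated divergence to pointwise divergence of $H(x_0, y_0)$ at an arbitrary prescribed pair.

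I work with $T^2$ rather than $T$, since $T^2 = T^{*}T$ is a genuinely nonnegative self-adjoint operator with nonnegative spectrum, so Jensen's inequality will apply cleanly. Because $K \geq 0$,
\[
\Vert T\Vert^2 = \Vert T^2\Vert = \sup\{\langle T^2 f, f\rangle : f \geq 0,\ \Vert f\Vert_{L^2(\omega)} = 1\} > 1,
\]
producing a nonnegative unit vector $\phi_0$ with $\langle T^2 \phi_0, \phi_0\rangle > 1$. I then replace $\phi_0$ by $\phi := T^2 \phi_0 / \Vert T^2 \phi_0\Vert_{L^2(\omega)}$; this is strictly positive at every point of $\Omega$, because $K > 0$ forces $T^2\phi_0(x) = \int_\Omega K_2(x, z)\phi_0(z)\, d\omega(z) > 0$ for every $x$. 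A short log-convexity argument (the sequence $k \mapsto \Vert T^k\phi_0\Vert_{L^2(\omega)}^2$ is log-convex by Cauchy--Schwarz, so its successive ratios are nondecreasing) shows $\mu := \langle T^2 \phi, \phi\rangle \geq \langle T^2 \phi_0, \phi_0\rangle > 1$. The spectral theorem for $T^2 \geq 0$ combined with Jensen's inequality for the convex function $\lambda \mapsto \lambda^j$ on $[0, \infty)$ then gives $\langle T^{2j}\phi, \phi\rangle \geq \mu^j$, and hence
\[
\int_{\Omega}\!\!\int_{\Omega} H(z, w)\, \phi(z)\, \phi(w)\, d\omega(z)\, d\omega(w) \geq \sum_{j \geq 1} \langle T^{2j}\phi, \phi\rangle \geq \sum_{j\geq 1}\mu^j = +\infty.
\]

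To promote this to pointwise divergence, I fix arbitrary $x_0, y_0 \in \Omega$ and set $A_n := \{z \in \Omega : K(x_0, z) \geq \phi(z)/n \text{ and } K(z, y_0) \geq \phi(z)/n\}$. Pointwise positivity of $K$ and $\phi$ gives $A_n \uparrow \Omega$; then $\phi_n := \phi\, \chi_{A_n} \to \phi$ in $L^2(\omega)$ by dominated convergence, so $\mu_n := \langle T^2 \phi_n, \phi_n\rangle / \Vert \phi_n\Vert_{L^2(\omega)}^2 \to \mu > 1$. Fixing $n$ large enough that $\mu_n > 1$, Jensen applied to $\phi_n$ yields $\sum_j \langle T^{2j}\phi_n, \phi_n\rangle = +\infty$. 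The three-factor identity
\[
K_{2j+2}(x_0, y_0) = \int_{\Omega}\!\!\int_{\Omega} K(x_0, z)\, K_{2j}(z, w)\, K(w, y_0)\, d\omega(z)\, d\omega(w),
\]
combined with the lower bounds $K(x_0, z) \geq \phi(z)/n$ and $K(w, y_0) \geq \phi(w)/n$ on $A_n$, then gives $K_{2j+2}(x_0, y_0) \geq n^{-2}\, \langle T^{2j}\phi_n, \phi_n\rangle$, and summing over $j \geq 1$ forces $H(x_0, y_0) = +\infty$.

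The hard part is this final reduction, lifting integrated divergence to pointwise divergence at \emph{every} $(x_0, y_0)$. It relies crucially on the strict pointwise positivity of $K$ (not merely $\omega$-a.e.\ positivity): this is exactly what lets the sets $A_n$ exhaust $\Omega$ and lets the bounds $K(x_0, z), K(w, y_0) \geq \phi(z)/n, \phi(w)/n$ capture the full divergent mass of the integrated quadratic form against $\phi \otimes \phi$.
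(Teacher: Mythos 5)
Your argument, in the range $1<\Vert T\Vert<\infty$, is correct and takes a genuinely different route from the paper's. The paper argues by contraposition in a few lines: if $H(x_0,y_0)<\infty$ for a single pair, then $f=H(x_0,\cdot)$ is strictly positive, finite a.e., and satisfies $Tf\le f$ pointwise, so Schur's test gives $\Vert T\Vert\le 1$. You instead give a direct proof: a spectral/Jensen argument for $T^2$ producing divergence of the quadratic form against a strictly positive $\phi$, followed by a localization step (the sets $A_n$ and the three-factor identity) promoting that divergence to every pair $(x_0,y_0)$. For bounded $T$ all your steps check out: the restriction to nonnegative test functions uses $K_2\ge 0$, the log-convexity of $k\mapsto\Vert T^k\phi_0\Vert^2$ does give $\mu\ge\langle T^2\phi_0,\phi_0\rangle>1$, Jensen applied to the spectral measure of $T^2$ gives $\langle T^{2j}\phi,\phi\rangle\ge\mu^j$, and the kernel identities and the bound $K_{2j+2}(x_0,y_0)\ge n^{-2}\langle T^{2j}\phi_n,\phi_n\rangle$ are justified by Tonelli ($\sigma$-finiteness); that $\phi$ may be $+\infty$ on a null set is harmless.

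There is, however, a genuine gap: the hypothesis $\Vert T\Vert>1$ includes $\Vert T\Vert=+\infty$, i.e.\ $T$ unbounded on $L^2(\omega)$, and this is precisely the case in which the lemma is typically invoked. Every functional-analytic ingredient you use presupposes boundedness: the identity $\Vert T^2\Vert=\Vert T\Vert^2$, self-adjointness of $T$ and the spectral theorem for $T^2$, the membership $T^2\phi_0\in L^2(\omega)$ needed even to define $\phi$, and the continuity $\langle T^2\phi_n,\phi_n\rangle\to\langle T^2\phi,\phi\rangle$ along $\phi_n\to\phi$ in $L^2(\omega)$. None of these is available a priori when $\Vert T\Vert=\infty$, so as written your proof establishes the lemma only for bounded $T$, whereas the paper's Schur-test contrapositive covers the unbounded case with no extra work. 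The gap is repairable within your scheme: by $\sigma$-finiteness and monotone convergence choose a set $E$ of finite measure and a level $N$ so that the bounded (indeed Hilbert--Schmidt) operator with kernel $\widetilde K=\min(K,N)\chi_{E\times E}$ has norm $>1$; run your whole argument for $\widetilde K$, noting $K_{2j}\ge \widetilde K_{2j}$ on $E\times E$, and in the final promotion step keep the original end factors $K(x_0,z)$ and $K(w,y_0)$, which are still bounded below by $\phi/n$ on the exhausting sets. But some such reduction must be stated explicitly; without it the case $\Vert T\Vert=+\infty$ is not covered.
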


\begin{proof}

Suppose there exist $x_0, y_0 \in \Omega$ such that $H(x_0, y_0) < \infty$.  Then 
\[  \int_{\Omega} H(x_0,z) K(z,y_0) \, d \omega (z)  = \int_{\Omega}
\sum_{j=1}^{\infty} K_j (x_0,z) K(z,y_0) \, d \omega (z) \]
\[  =  \sum_{j=1}^{\infty} K_{j+1} (x_0,y_0)
<  H(x_0,y_0) < \infty . \]
Since $K(z, y_0) >0$ for all $z \in \Omega$, we see that $H(x_0, z) < \infty$ for a.e. $z$.

Let $f(x) = H(x_0, x)$.  Then by the symmetry of $H$,
\[ Tf (x) = \int_{\Omega} K(x,z) H(z, x_0) \, d \omega (z) = \sum_{j
=1}^{\infty} \int_{\Omega} K(x,z) K_j (z, x_0) \, d \omega (z) \]
\[ = \sum_{j=1}^{\infty} K_{j+1} (x, x_0) < H(x, x_0) = H(x_0, x) = f(x).   \]
Since $0 < f(x) < \infty$ a.e., Schur's Lemma implies that $\Vert T \Vert
\leq 1$.  

\end{proof}

We turn to the proof of the lower estimate for $H$ in Theorem \ref{keythm}.  It is only meaningful for $\Vert T \Vert \leq 1$, by the previous lemma, but the proof does not involve $\Vert T \Vert$.

We say that $d$ is a quasi-metric with quasi-metric constant $\kappa$, on a nonempty set $\Omega$, if $d: \Omega \times \Omega \rightarrow
[0, \infty)$ is not identically $0$ and satisfies $d(x,y) = d(y,x)$ and the quasi-triangle inequality 
(\ref{quasimetricdef}) for all $x, y, z \in \Omega$.  

We will use the fact that $\kappa \geq 1/2$.  To see this fact, select $x \in \Omega$.  If $d(x,x) >0$, applying (\ref{quasimetricdef}) with $x=y=z$ gives $\kappa \geq 1/2$.  If $d(x,x) =0$, then there must exist $y \in \Omega$ such that $d(x,y) >0$, and applying (\ref{quasimetricdef}) with $z=x$ implies that $\kappa \geq 1$.  Note that $\kappa = 1/2$ is attained in the case where $d$ is constant.  

\begin{Lemma} (Ptolemy) \label{PtolemyI}
Let $d$ be a quasi-metric with constant $\kappa$ on a set $\Omega$.  Suppose $y_1, y_2, y_3, y_4 \in \Omega$.  Let $a = d(y_1, y_2), b= d(y_2, y_3), c= d(y_3, y_4), d = d(y_4, y_1), s = d(y_2, y_4)$, and $t= d(y_1, y_3)$. Then 
\[  st \leq 4 \kappa^2 \max(ac, bd). \] 
\end{Lemma}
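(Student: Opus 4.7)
The plan is to bound $s$ and $t$ separately via the quasi-triangle inequality along the two possible paths through intermediate vertices, convert the resulting sums into maxima, and then reduce to a purely combinatorial inequality in four non-negative reals.

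For $s = d(y_2, y_4)$, the quasi-triangle inequality applied via $y_1$ and via $y_3$ gives $s \le \kappa(a + d)$ and $s \le \kappa(b + c)$, respectively. Using the elementary fact $x + y \le 2\max(x, y)$, these combine to
\[
s \le 2\kappa \min\bigl(\max(a, d),\, \max(b, c)\bigr).
\]
An identical argument for $t = d(y_1, y_3)$, via $y_2$ and $y_4$, yields $t \le 2\kappa \min(\max(a, b),\, \max(c, d))$. Multiplying these, it suffices to establish the combinatorial claim
\[
\min\bigl(\max(a, d),\, \max(b, c)\bigr) \cdot \min\bigl(\max(a, b),\, \max(c, d)\bigr) \le \max(ac, bd) \qquad (*)
\]
for arbitrary non-negative reals $a, b, c, d$.

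To prove $(*)$ I would exploit its cyclic symmetry: the substitution $(a, b, c, d) \mapsto (b, c, d, a)$ interchanges the two factors on the left and swaps $ac$ with $bd$ on the right, so both sides are invariant. I may therefore assume $a = \max\{a, b, c, d\}$. Under this assumption $\max(a, d) = \max(a, b) = a$ dominates both $\max(b, c)$ and $\max(c, d)$, so the left-hand side of $(*)$ collapses to $\max(b, c) \cdot \max(c, d)$, which takes one of the four values $bc$, $c^2$, $cd$, or $bd$ depending on the orderings of $b, c, d$. Since $b, c, d \le a$, each of $bc$, $c^2$, $cd$ is at most $ac$, while the remaining possibility is exactly $bd$; in every case the product is bounded by $\max(ac, bd)$.

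Main obstacle: the natural geometric-mean estimate $st \le \kappa^2 \sqrt{(a+d)(b+c)(a+b)(c+d)}$ obtained directly from $\min \le \sqrt{\text{product}}$ is too weak --- for $a = d = 1$, $b = c = 2$ it gives $\sqrt{72} > 8 = 4\max(ac, bd)$. The key technical idea is to trade each sum $x + y$ for $2\max(x, y)$ \emph{before} multiplying; the resulting expression is manifestly invariant under cyclic relabeling of the quadrilateral, after which the inequality is settled by placing the overall maximum at a single vertex and running the short case analysis above.
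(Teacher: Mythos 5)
Your proof is correct, and it rests on the same ingredients as the paper's: the quasi-triangle inequality applied to each diagonal through adjacent vertices, together with a without-loss-of-generality reduction justified by the cyclic symmetry of the configuration. The paper reaches the conclusion more directly by assuming $a=\min(a,b,c,d)$, whence $s\le\kappa(a+d)\le 2\kappa d$ and $t\le\kappa(a+b)\le 2\kappa b$, so $st\le 4\kappa^2 bd$ without needing your intermediate combinatorial inequality $(*)$ or its case analysis.
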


\begin{proof}
Without loss of generality, assume $a = \min(a, b, c, d)$.  Then
\[ s \leq \kappa (a+d) \leq 2 \kappa d  \,\,\, \mbox{and} \,\,\,   t \leq \kappa (a+b) \leq 2 \kappa b. \]
Hence $st \leq 4 \kappa^2 bd$.
\end{proof}

\begin{Lemma}\label{keylowest}
Let $(\Omega, \omega)$ be a $\sigma$-finite measure space, and let $K$ be a quasi-metric kernel on $\Omega$.  Let $K_1 =K$ and define $K_j$ by (\ref{defkj}) for $j \geq 2$.  Then for $c=(4 \kappa^2)^{-1}$,  
\begin{equation}
 \sum_{j=1}^{\infty} K_j (x,y) \geq K(x,y) e^{c K_2(x,y)/K(x,y)} .  \label{Hlowest}
\end{equation}

\end{Lemma}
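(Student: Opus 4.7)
The plan is to prove the pointwise bound
\[
K_j(x,y) \geq \frac{(cM(x,y))^{j-1}}{(j-1)!}\,K(x,y), \qquad j \geq 1,
\]
with $M(x,y) := K_2(x,y)/K(x,y)$ and $c := 1/(4\kappa^2)$, and then sum the resulting Taylor series in $j$ to arrive at $H(x,y) \geq K(x,y)\,e^{cM(x,y)}$, which is (\ref{Hlowest}). The base case $j=1$ is trivial, so the issue is really to handle $j \geq 2$ in a way that preserves the $1/(j-1)!$ factor.

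To obtain the pointwise bound I would first symmetrize the iterated integral: relabeling dummy variables yields
\[
(j-1)!\,K_j(x,y) = \int_{\Omega^{j-1}} \sum_{\sigma \in S_{j-1}} K(x, z_{\sigma(1)}) \prod_{i=1}^{j-2} K(z_{\sigma(i)}, z_{\sigma(i+1)})\,K(z_{\sigma(j-1)}, y)\, d\omega(z_1)\cdots d\omega(z_{j-1}).
\]
For each fixed configuration $(z_1, \ldots, z_{j-1})$ I would retain only the single term of this sum corresponding to the permutation $\sigma^*$ that lists the intermediate points in decreasing order of the ratio $r(z) := K(x,z)/K(z,y)$. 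With this choice, $r_{\sigma^*(i)} \geq r_{\sigma^*(i+1)}$ at every consecutive pair, which identifies the smaller branch of the quasi-metric Ptolemaic minimum of Lemma \ref{PtolemyI} and therefore produces
\[
K(z_{\sigma^*(i)}, z_{\sigma^*(i+1)})\,K(x,y) \geq \frac{1}{4\kappa^2}\,K(x, z_{\sigma^*(i+1)})\,K(z_{\sigma^*(i)}, y), \qquad i = 1, \ldots, j-2.
\]

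Multiplying these $j-2$ inequalities together with the two endpoint factors $K(x, z_{\sigma^*(1)})$ and $K(z_{\sigma^*(j-1)}, y)$, the right-hand sides telescope: every $K(x, z_{\sigma^*(k)})$ and every $K(z_{\sigma^*(k)}, y)$ for $k = 1, \ldots, j-1$ appears exactly once, so the right-hand product becomes the symmetric expression $\prod_{k=1}^{j-1} K(x, z_k) K(z_k, y)$, which no longer depends on $\sigma^*$. Integrating the resulting inequality over $\Omega^{j-1}$ and dividing by $(j-1)!$ converts the right-hand integral into $K_2(x,y)^{j-1}$, yielding
\[
K_j(x,y) \geq \frac{c^{j-2}}{(j-1)!}\,K(x,y)\,M(x,y)^{j-1} \geq \frac{(cM(x,y))^{j-1}}{(j-1)!}\,K(x,y),
\]
where the last step uses $c \leq 1$, a consequence of $\kappa \geq 1/2$ noted above the statement.

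The main obstacle is the selection of $\sigma^*$: the quasi-metric Ptolemy bound contains a minimum of two expressions at each edge of the chain, and it is not a priori clear that any single ordering of the intermediate points attains the ``correct" branch of that minimum simultaneously at every edge. Monotonicity of the ratio $r$ along the chain is precisely what guarantees this compatibility, after which the telescoping cancellation of the $K(x, z_{\sigma^*(k)})$ and $K(z_{\sigma^*(k)}, y)$ factors into the fully symmetric product is automatic.
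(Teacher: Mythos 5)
Your argument is correct and is essentially the paper's own proof in dual form: sorting the intermediate points by the ratio $r(z)=K(x,z)/K(z,y)$ and keeping the sorted term of the permutation sum is the same as the paper's restriction of the integral to the ordered region $\{F(z_1)\le\cdots\le F(z_{j-1})\}$ (with $F=1/r$) together with permutation invariance, and both rest on the same Ptolemy inequality (Lemma \ref{PtolemyI}) with identical constant bookkeeping, ending with the factorization into $K_2(x,y)^{j-1}$ and the use of $4\kappa^2\ge 1$. The only detail you omit is the paper's preliminary reduction to the truncated kernels $\min(K,n)$ (followed by monotone convergence), which keeps all values finite so that the ratio $r$ and the sorted permutation are well defined even where $K=+\infty$.
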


\begin{proof}  Let $d = 1/K$.  We can assume $d(x,y)>0$ for all $x, y$. To see this, for $n \in \N$, let
$K^{(n)} = \min (K, n)$.� Then $K^{(n)}$ is a quasi-metric with the same
quasi-metric constant as for $K$, corresponding to $d_n = \max (d, 1/n)$. 
Using (\ref{Hlowest}) for $K^{(n)}$ yields the result for $K$.  Note that at points where $K(x,y)=\infty$ or $K_2(x,y) = \infty$, both sides of (\ref{Hlowest}) are infinite.

Fix $(x,y) \in \Omega$.  For $z \in \Omega$, define 
\[ F(z) = \frac{d(x,z)}{d(y,z)} . \]
For $j \geq 2$, let 
\[ A_j = \{ (z_1, \dots , z_{j-1}) \in \Omega^{j-1} : F(z_1) \leq F(z_2) \leq \cdots \leq F(z_{j-1}) \} . \]
For $z= (z_1, \dots , z_{j-1}) \in A_j$, we have 
\[ d(x, z_{m+1} ) d(y, z_m) \geq d(x, z_m) d(y, z_{m+1}) \]
for $m=1, \dots , j-2$, and hence, by Lemma \ref{PtolemyI}, 
\[ d(z_m, z_{m+1}) d(x,y) \leq 4 \kappa^2 d(x, z_{m+1}) d(y, z_m) . \]
Therefore, letting $d \omega_{j-1}(z) = d \omega(z_1) d\omega(z_2) \cdots d \omega(z_{j-1})$, we get
\[ K_j (x,y) = \int_{\Omega^{j-1}} \frac{1}{d(x, z_1)} \frac{1}{d(z_1, z_2)} \cdots \frac{1}{d(z_{j-2}, z_{j-1})} \frac{1}{d(z_{j-1}, y)} \, d \omega_{j-1}(z)  \]
\[ \geq \left( \frac{d(x,y)}{4 \kappa^2} \right)^{j-2}  \int_{A_j} \frac{1}{d(x, z_1)} \frac{1}{d(z_1, y)} \frac{1}{d(x, z_2)} \frac{1}{d(z_2, y)} \cdots \]
\[  \cdots \frac{1}{d(x, z_{j-1})} \frac{1}{d(z_{j-1}, y)} \, d \omega_{j-1}(z).  \]
This last integral is invariant under permutations of the indices \newline \noindent $1, \dots , j-1$ in the definition of $A_j$, and hence has value at least $\frac{1}{(j-1)!}$ times the integral over all of $\Omega^{j-1}$, which splits and gives the value $K_2(x,y)^{j-1}$.  Therefore
\[ K_j (x,y) \geq c^{-1} K(x,y)  \frac{\left(c K_2 (x,y)/K(x,y) \right)^{j-1} }{(j-1)!}  , \]
with $c^{-1} = 4 \kappa^2  \geq 1.$
We sum these estimates over $j \geq 2$ and add $K(x,y)=K_1(x,y)$ to obtain
\[ \sum_{j=1}^{\infty} K_j (x,y) \geq K(x,y)  e^{c K_2 (x,y)/K(x,y)} . \]
\end{proof}

Now we turn to the upper estimate of $H$ in Theorem \ref{keythm}.  The following lemma is standard (see \cite{Hei}, Proposition 14.5), except for 
the value of the constants, which we will use. 
As indicated in the proof in \cite{Hei} (pp. 111-112), the inequality below holds with   $\beta \ge 2 \log_2 (2 \kappa)$ and $C= (2 \kappa)^2$. Notice 
that in the proof in \cite{Hei}  the quasi-ultra-metric condition $d(x,y) \le K \max [d(x,z), d(y,z)]$ 
is used in place of (\ref{quasimetricdef}), so the constant $K$ should be replaced with $2 \kappa$.)

\begin{Lemma}\label{metricequivalent}\cite{Hei} 
Let $d$ be a quasi-metric with constant $\kappa$ on set $\Omega$.  Then there exists a quasi-metric $D$ with constant 1 such that
\begin{equation}\label{bilat}
D^{\beta}  \leq d \leq CD^{\beta}
\end{equation}
for $\beta = 2 \log_2 (2 \kappa)$ and $C= (2 \kappa)^2$.  
\end{Lemma}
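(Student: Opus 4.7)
My plan is to use Frink's chain construction applied to a fractional power of $d$. First I observe that the additive quasi-triangle inequality \eqref{quasimetricdef} immediately implies the multiplicative (ultra-metric type) form
\[
d(x,y)\le \kappa\bigl(d(x,z)+d(z,y)\bigr)\le 2\kappa\,\max\bigl(d(x,z),d(z,y)\bigr),
\]
so $d$ is a quasi-ultra-metric with constant $2\kappa$.

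Next I would set $\rho=d^{1/\beta}$ with $\beta=2\log_2(2\kappa)$, chosen precisely so that $(2\kappa)^{1/\beta}=\sqrt 2$. Taking the $1/\beta$ power of the ultra-metric inequality gives
\[
\rho(x,y)\le \sqrt 2\,\max(\rho(x,z),\rho(z,y))\le \sqrt 2\,\bigl(\rho(x,z)+\rho(z,y)\bigr),
\]
so $\rho$ is itself a quasi-metric, now with constant $\sqrt 2<2$. Define the chain distance
\[
D(x,y)=\inf\Bigl\{\sum_{i=0}^{n-1}\rho(x_i,x_{i+1}):n\ge 1,\ x_0=x,\ x_n=y\Bigr\}.
\]
As the infimum over all chains, $D$ is symmetric and satisfies the genuine triangle inequality, so it is a quasi-metric with constant $1$. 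Single-edge chains give $D\le \rho$, which when raised to the $\beta$-th power yields $D^{\beta}\le d$, one half of \eqref{bilat}.

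The main obstacle is the reverse estimate $\rho\le 2D$, which I would prove by Frink-style induction on the chain length: for every chain $x_0,\dots,x_n$ with total $\rho$-length $S=\sum_{i=0}^{n-1}\rho(x_i,x_{i+1})$, one has $\rho(x_0,x_n)\le 2S$. The cases $n=1$ and $n=2$ are immediate, the latter using $\sqrt 2<2$. For general $n\ge 2$, choose the largest index $k$ with $\sum_{i<k}\rho(x_i,x_{i+1})\le S/2$; then by the maximality of $k$ one also has $\sum_{i>k}\rho(x_i,x_{i+1})\le S/2$, and each individual link is at most $S$. The induction hypothesis applied to the two sub-chains gives $\rho(x_0,x_k)\le S$ and $\rho(x_{k+1},x_n)\le S$. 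Two applications of the $\sqrt 2$-ultra-metric inequality then produce
\[
\rho(x_k,x_n)\le \sqrt 2\,\max\bigl(\rho(x_k,x_{k+1}),\rho(x_{k+1},x_n)\bigr)\le \sqrt 2\,S,
\]
\[
\rho(x_0,x_n)\le \sqrt 2\,\max\bigl(\rho(x_0,x_k),\rho(x_k,x_n)\bigr)\le \sqrt 2\cdot\sqrt 2\,S=2S,
\]
closing the induction. The point is that the gain $\sqrt 2<2$ in the triangle inequality for $\rho$ exactly compensates the geometric splitting. Passing to the infimum over chains yields $\rho\le 2D$, i.e., $d^{1/\beta}\le 2D$, so $d\le 2^{\beta}D^{\beta}=(2\kappa)^2 D^{\beta}$. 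Together with $D^{\beta}\le d$ this is exactly \eqref{bilat} with the stated constants $\beta=2\log_2(2\kappa)$ and $C=(2\kappa)^2$.
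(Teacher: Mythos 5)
Your argument is correct and is essentially the proof the paper invokes only by citation: the paper merely quotes the constants from Proposition 14.5 of \cite{Hei}, whose proof is exactly this Frink--Mac\'{\i}as--Segovia chain construction, with the exponent normalized so that $\rho=d^{1/\beta}$ obeys the $\sqrt2$-max inequality and the half-sum splitting induction gives $\rho\le 2D$, hence $C=2^{\beta}=(2\kappa)^2$. One small point to patch: in this setting $d(x,x)$ need not vanish (the paper explicitly allows $D(x,x)>0$), so when your chosen index $k$ is $0$ or $n-1$ one of the ``sub-chains'' degenerates to a single point and the induction hypothesis gives no bound on $\rho(x_0,x_0)$ or $\rho(x_n,x_n)$; in those cases split into just two pieces (the single link, which is $\le S$, and the remaining sub-chain, whose total is $<S/2$), which yields $\rho(x_0,x_n)\le\sqrt2\,S\le 2S$ directly and keeps the induction closed.
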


Note that $D$ may not be a metric because we may have $D(x,x)>0$ or $D(x,y) =0$ for $x \neq y$. The proof in \cite{Hei} can easily be adapted 
to this case. 

\begin{Lemma}(Inverse Ptolemy)\label{inversePtolemy}
Let $D$ be a quasi-metric with constant 1 on a set $\Omega$.  Let $y_1, y_2, y_3, y_4 \in \Omega$.  Let $a = D(y_1, y_2), b= D(y_2, y_3), c= D(y_3, y_4), d = D(y_4, y_1), s = D(y_2, y_4)$, and $t= D(y_1, y_3)$. Suppose the inequality 
\[  ac \geq \tau^2 bd \]
holds with some $\tau >1$.
Then 
\[  st \geq (1 - \tau^{-1})^2 ac. \]
\end{Lemma}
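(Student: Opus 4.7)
The plan is to exploit the four genuine triangle inequalities available when the quasi-metric constant is $1$, namely $a\le s+d$, $a\le t+b$, $c\le s+b$, $c\le t+d$. These immediately give $s\ge\max((a-d)_+,(c-b)_+)$ and $t\ge\max((a-b)_+,(c-d)_+)$, where $x_+=\max(x,0)$. The configuration has two useful symmetries: the relabeling $y_1\leftrightarrow y_2,\ y_3\leftrightarrow y_4$ swaps $b\leftrightarrow d$ and $s\leftrightarrow t$, while the cyclic relabeling $y_j\mapsto y_{j+2}$ swaps $a\leftrightarrow c$ and $b\leftrightarrow d$ and preserves $s,t$; both preserve $ac$, $bd$, and the product $st$. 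Composing these appropriately, I may assume $c\ge a$ and $b\ge d$. Then $d^2\le bd\le ac/\tau^2$ gives $d\le\sqrt{ac}/\tau\le c/\tau$, so $\tau d\le c$ -- equivalently $(c-d)/(c+d)\ge(\tau-1)/(\tau+1)$ -- and in particular $t\ge c-d\ge 0$.

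The heart of the proof is a case analysis that selects the correct pair of triangle inequalities to multiply. If $b>c$, then $bd\le ac/\tau^2$ gives both $d<a/\tau^2$ and $d<c/\tau^2$, so $st\ge(a-d)(c-d)>ac(1-1/\tau^2)^2\ge ac(1-1/\tau)^2$ immediately. In the remaining case $b\le c$ I split on whether $a+b\ge c+d$ -- which makes $s\ge a-d$ the useful bound -- or $a+b<c+d$, in which $s\ge c-b$ is sharper (and which itself subdivides on whether $a+b\le c$). In each nontrivial subcase the key step is to substitute the constraint $b\le ac/(\tau^2 d)$ (respectively $d\ge a+b-c$) into the case hypothesis to derive a multiplicative identity of the form
\[ (a-d)(c+d)\ge ac(1-1/\tau^2) \qquad\text{or}\qquad (c-b)(a+b)\ge ac(1-1/\tau^2). \]
Converting this to $(a-d)(c-d)$ or $(c-b)(c-d)$ uses $(c-d)/(c+d)\ge(\tau-1)/(\tau+1)$ or $(c-d)/(a+b)\ge(\tau-1)/(\tau+1)$ respectively, each of which follows from $\tau d\le c$ together with the case hypothesis. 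The desired exponent is then produced by the algebraic identity $(1-1/\tau^2)\cdot(\tau-1)/(\tau+1)=(1-1/\tau)^2$.

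The main obstacle I anticipate is that a naive symmetric approach -- multiplying $a\le s+d$ and $c\le s+b$ and applying AM--GM to obtain $s\ge\sqrt{ac}-(b+d)/2$ -- yields the desired $(1-1/\tau)$-type bound for $s$ only under the restriction $b+d\le 2\sqrt{ac}/\tau$, while the hypothesis $bd\le ac/\tau^2$ alone does not control $b+d$ (take $b$ large and $d$ small). One is therefore forced to pair the triangle inequalities for $s$ and $t$ asymmetrically (for example $s\ge a-d$ together with $t\ge c-d$ when $d$ is the small one), and organizing the case split so that the correct pairing is always identifiable -- and so that the $(1-1/\tau^2)\to(1-1/\tau)^2$ conversion closes the gap in each subcase -- is the delicate point.
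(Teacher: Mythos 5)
Your argument is correct, but it is organized quite differently from the paper's proof. You normalize $c\ge a$, $b\ge d$ and run a case analysis ($b>c$; then $b\le c$ split according to $a+b\ge c+d$ or not, with a further split at $a+b\le c$), in each nontrivial case deriving a product inequality $(a-d)(c+d)\ge ac(1-\tau^{-2})$ or $(c-b)(a+b)\ge ac(1-\tau^{-2})$ and then correcting by the ratio bound $(c-d)/(c+d)\ge(\tau-1)/(\tau+1)$, resp. $(c-d)/(a+b)\ge(\tau-1)/(\tau+1)$, closing with the identity $(1-\tau^{-2})\cdot\frac{\tau-1}{\tau+1}=(1-\tau^{-1})^2$. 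I checked that the sketched steps do close: e.g.\ $c+d-a\le b$ and $bd\le \tau^{-2}ac$ give $d(c+d-a)\le \tau^{-2}ac$, hence the first product inequality; $d\ge a+b-c$ gives the second; and in the easy subcase $a+b\le c$ one has $s\ge c-b\ge a$ and $t\ge c-d\ge(1-\tau^{-1})c$ directly. The paper's proof avoids all products of differences: after normalizing $a\ge c$ and, by your first symmetry ($b\leftrightarrow d$, $s\leftrightarrow t$), $a\ge\tau b$ (possible since $a^2\ge ac\ge\tau^2bd$), it bounds the diagonals separately: $t\ge a-b\ge(1-\tau^{-1})a$, and from $ac\ge\tau^2bd$ either $a\ge\tau d$, giving $s\ge a-d\ge(1-\tau^{-1})a\ge(1-\tau^{-1})c$, or $c\ge\tau b$, giving $s\ge c-b\ge(1-\tau^{-1})c$; multiplying finishes in a few lines. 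Both proofs rest on the asymmetric pairing of triangle inequalities that you correctly identify as the crux, but the paper's dichotomy ``$a\ge\tau b$ or $a\ge\tau d$'' extracts the factor $(1-\tau^{-1})$ for each diagonal at once, with no intermediate exponent $(1-\tau^{-2})$ and no ratio conversion, so your route yields the same constant at the cost of noticeably heavier casework; in a final write-up you should also dispatch the degenerate situations where you divide by $b$, $d$, $c+d$ or $a+b$ (all trivial, since the conclusion is immediate when $ac=0$).
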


\begin{proof}
Without loss of generality, $a \geq c$.  Since $a^2 \geq ac \geq \tau^2 bd$, either $a \geq \tau b$ or $a \geq \tau d$.  We can assume $a \geq \tau b$.  Then 
\[ t \geq a-b \geq (1 - \tau^{-1}) a . \]
Simce $ac \geq \tau^2 bd$, either $a \geq \tau d$ or $c \geq \tau b$.  In the first case, 
\[  s \geq a-d \geq (1 - \tau^{-1})a \geq  (1 - \tau^{-1})c.  \]
In the second case, 
\[ s \geq c-b \geq   (1 - \tau^{-1})c . \]
Hence we always have the estimate
\[st \geq (1 - \tau^{-1}) a  (1 - \tau^{-1})c = (1 - \tau^{-1})^2 ac.\]

\end{proof}

\begin{Cor}\label{Ptolomyinversed}
Let $d$ be a quasi-metric with constant $\kappa$ on a set $\Omega$, and let $D$ be the quasi-metric with constant 1 determined in Lemma \ref{metricequivalent}.  Let $x,y,u, v \in \Omega$.  Suppose that for some $\tau >1$,
\[  \frac{D(x,v)}{D(y,v)} \geq \tau^2 \frac{D(x,u)}{D(y,u)}.  \]
Then
\[ d(x,y) d(u,v) \geq  (1 - \tau^{-1})^{2 \beta}(2 \kappa)^{-4}  d(x,v)d(y,u).  \]
\end{Cor}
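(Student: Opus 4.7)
The plan is to apply the Inverse Ptolemy lemma (Lemma \ref{inversePtolemy}) to the quasi-metric $D$, then transfer the resulting inequality from $D$ back to $d$ using the bilateral comparison $D^{\beta} \leq d \leq C D^{\beta}$ of Lemma \ref{metricequivalent}, with $\beta = 2\log_2(2\kappa)$ and $C = (2\kappa)^2$.

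First I would choose the labeling of the four points in Lemma \ref{inversePtolemy} so that its hypothesis $ac \geq \tau^2 bd$ matches the hypothesis given here. Setting $y_1 = y$, $y_2 = u$, $y_3 = x$, $y_4 = v$, the quantities of the lemma become
\[
a = D(y,u), \quad b = D(u,x), \quad c = D(x,v), \quad d = D(v,y),
\]
together with $s = D(u,v)$ and $t = D(x,y)$. With this labeling, $ac = D(y,u) D(x,v)$ and $bd = D(x,u) D(y,v)$, so the assumed inequality
\[
\frac{D(x,v)}{D(y,v)} \geq \tau^2 \frac{D(x,u)}{D(y,u)}
\]
rearranges exactly into $ac \geq \tau^2 bd$. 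Hence Lemma \ref{inversePtolemy} applies and gives
\[
D(x,y)\, D(u,v) \geq (1 - \tau^{-1})^2\, D(x,v)\, D(y,u).
\]

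Now I raise both sides to the power $\beta$ and invoke Lemma \ref{metricequivalent} on each factor. On the left, $D^{\beta} \leq d$ yields $D(x,y)^{\beta} D(u,v)^{\beta} \leq d(x,y) d(u,v)$; on the right, $d \leq C D^{\beta}$ gives $D(x,v)^{\beta} \geq d(x,v)/C$ and $D(y,u)^{\beta} \geq d(y,u)/C$. Combining,
\[
d(x,y)\, d(u,v) \geq (1 - \tau^{-1})^{2\beta}\, C^{-2}\, d(x,v)\, d(y,u),
\]
and substituting $C^{-2} = (2\kappa)^{-4}$ is the desired conclusion.

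There is no real obstacle in this argument; it is a bookkeeping corollary. The only point requiring care is the correct identification of the four vertices in the Ptolemy configuration so that the hypothesis appears as $ac \geq \tau^2 bd$ rather than as a statement about a different pair of ``diagonal'' products; after that the bilateral comparison of $d$ and $D^\beta$ is applied in whichever direction is favorable on each side of the inequality.
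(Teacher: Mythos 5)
Your proof is correct and follows the paper's own argument: apply Lemma \ref{inversePtolemy} to the four points (your labeling $y_1=y$, $y_2=u$, $y_3=x$, $y_4=v$ is the right one), raise to the power $\beta$, and use the two-sided comparison $D^{\beta}\leq d\leq (2\kappa)^{2}D^{\beta}$ in the favorable direction on each side, which produces the factor $(2\kappa)^{-4}$. The paper states this in one line; you have simply made the bookkeeping explicit.
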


\begin{proof}
The result can be immediately obtained by raising both sides of the estimate in Lemma \ref{inversePtolemy} to the power $\beta$ and applying the bilateral inequality (\ref{bilat}) for $d$ and $D^{\beta}$. 
\end{proof}

We now turn to the proof of the upper estimate in Theorem \ref{keythm}.
Define the quasi-metric $d = 1/K$.  Let $D$ be the quasi-metric with constant 1 determined in Lemma \ref{metricequivalent}. By considering $\min (K,n)$ as in the proof of Lemma~\ref{keylowest} and applying the monotone convergence theorem, we can assume $D(x,y)>0$ for all $x,y \in \Omega$. Fix $x, y \in \Omega$ and define the function $F$ by 
\begin{equation}\label{capitalF} 
F(z) = \frac{D(x,z)}{D(y,z)}, \quad z \in \Omega,  
\end{equation}
and $f$ by 
\begin{equation}\label{littlef}
 f(z) = \frac{1}{\sqrt{d(x,z) d(y,z)}}, \quad z \in \Omega.
 \end{equation}
Fix $\tau >1$ and $j \geq 2$.  
The heart of the proof is the following pointwise estimate.

\begin{Lemma}
Let $\tau >1$, and let $F$ and $f$ be defined by (\ref{capitalF}) and (\ref{littlef}) respectively. 
For every chain of points $z_1, z_2, \dots, z_{j-1}$ in $\Omega$, there exists a subset 
\[ M = \{ m_1 , m_2, \dots , m_{\ell} \}  \subseteq \{ 1, \dots , j-2 \} , \]
with cardinality $|M|$, with $m_1 < m_2 < \dots < m_{\ell}$ such that
\begin{equation}
F(z_{m_k}) < F(z_{m_{k+1}}) \,\, \mbox{for all} \,\,\, k = 1, 2, \dots , \ell -1,  \label{order}
\end{equation}
and
\begin{equation}
A \leq (2 \kappa)^2 C(\tau, \kappa)^{|M| } \tau^{\beta (j-2-|M|)} d(x,y)^{|M|} \, B, \label{ABineq}
\end{equation}
where 
\[  A= \frac{1}{d(x, z_1)} \frac{1}{d(z_1, z_2)} \cdots \frac{1}{d(z_{j-2}, z_{j-1})} \frac{1}{d(z_{j-1}, y)} \]
and
\begin{align*}
B &=  f(z_1)  \frac{1}{d(z_1, z_2)} \cdots  \frac{1}{d(z_{m_1 -1}, z_{m_1} )} f(z_{m_1}) \\
&\times \prod_{k=1}^{\ell -1} \left[ f(z_{m_k +1}) \frac{1}{d(z_{m_k+1}, z_{m_k +2}) } \cdots \frac{1}{d(z_{m_{k+1}- 1}, z_{m_{k+1}}) } f(z_{m_{k+1}}) \right] \\
&\times f(z_{m_{\ell}+1}) \frac{1}{d(z_{m_\ell+1}, z_{m_\ell +2}) } \cdots \frac{1}{d(z_{j-2}, z_{j-1}) } f(z_{j-1}) . 
\end{align*}

\end{Lemma}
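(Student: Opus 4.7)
The plan is to compare $A$ and $B$ by cutting the sequence $z_1,\ldots,z_{j-1}$ into blocks at the indices of $M$, so that the internal edges $1/d(z_i,z_{i+1})$ strictly inside a block appear identically in $A$ and $B$ and therefore cancel in the ratio $A/B$. What survives is only two endpoint factors $1/(d(x,z_1)f(z_1))$ and $1/(d(z_{j-1},y)f(z_{j-1}))$ together with, for each $m_k\in M$, a single ``bridge'' factor $1/(d(z_{m_k},z_{m_k+1})f(z_{m_k})f(z_{m_k+1}))$. Corollary \ref{Ptolomyinversed} will be the main tool at the bridges, while the bilateral equivalence $D^\beta\le d\le (2\kappa)^2 D^\beta$ of Lemma \ref{metricequivalent} will be used throughout to convert leftover $d$-ratios into powers of $F$.

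To construct $M$, I would process $i=1,\ldots,j-2$ in order, flagging $i$ whenever $F(z_{i+1})\ge\tau^2 F(z_i)$ (so that Corollary \ref{Ptolomyinversed} is available at the edge $(z_i,z_{i+1})$), and then passing the flagged candidates through a greedy stack filter that rejects any candidate whose $F(z_i)$ does not exceed the previously accepted $F$-value. The goal is an $M$ satisfying simultaneously: (a) $F(z_{m_k+1})\ge\tau^2 F(z_{m_k})$ for each $k$, (b) the strict ordering (\ref{order}), and (c) for every block $[a,b]$ of the partition $[1,m_1],[m_1+1,m_2],\ldots,[m_\ell+1,j-1]$, the aggregate growth $F(z_b)/F(z_a)\le \tau^{2(b-a)}$. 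Given such $M$, each endpoint factor reduces to a square root like $\sqrt{d(y,z_1)/d(x,z_1)}$ and is at most $2\kappa\,F(z_1)^{-\beta/2}$ (and analogously $2\kappa\,F(z_{j-1})^{\beta/2}$) by Lemma \ref{metricequivalent}; each bridge factor, on invoking Corollary \ref{Ptolomyinversed} with $(u,v)=(z_{m_k},z_{m_k+1})$ (justified by (a)) and cleaning the residual $d$-ratios with Lemma \ref{metricequivalent}, is bounded by $\frac{(2\kappa)^6}{(1-\tau^{-1})^{2\beta}}\,d(x,y)\,(F(z_{m_k})/F(z_{m_k+1}))^{\beta/2}$.

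Multiplying everything out, the $F$-powers from the endpoints and the bridges regroup into a telescoping product over blocks, $\prod_{[a,b]}(F(z_b)/F(z_a))^{\beta/2}$, which by (c) is at most $\tau^{\beta\sum(b-a)}=\tau^{\beta(j-2-|M|)}$. Combining with $|M|$ copies of $d(x,y)$ and the leading constant $(2\kappa)^2$ from the endpoints yields (\ref{ABineq}) with $C(\tau,\kappa)=(2\kappa)^6/(1-\tau^{-1})^{2\beta}$. The main obstacle is the construction of $M$: one must achieve (a), (b), (c) simultaneously, and the interaction is delicate, since the naive choice ``take every big jump'' gives (a) and (c) but can fail (b), while a raw monotone subsequence of big jumps satisfies (b) but may leave uncovered jumps inside a block that violate (c). The greedy stack filter therefore has to be arranged so that candidates popped from the stack (for the sake of monotonicity) still lie in blocks whose aggregate $F$-growth is controlled, and verifying that such an arrangement actually works is the main combinatorial content of the argument.
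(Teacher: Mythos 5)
Your analytic skeleton is exactly the paper's: cancel the interior edges, bound the two endpoint square roots by $2\kappa F(z_1)^{-\beta/2}$ and $2\kappa F(z_{j-1})^{\beta/2}$, handle each bridge $d(z_{m},z_{m+1})$, $m\in M$, by Corollary \ref{Ptolomyinversed} plus the equivalence $D^{\beta}\le d\le(2\kappa)^2D^{\beta}$, and let the leftover $F$-powers telescope into $\tau^{\beta(j-2-|M|)}$, with the same constant $C(\tau,\kappa)=(2\kappa)^6(1-\tau^{-1})^{-2\beta}$. All of that is correct as far as it goes. But the lemma's entire content is the existence of a set $M$ for which this machinery applies, and that is precisely the step you leave open: you state the three requirements (a) big jump $F(z_{m+1})\ge\tau^2F(z_m)$ at each $m\in M$, (b) strict monotonicity of $F$ along $M$, (c) aggregate growth at most $\tau^{2(b-a)}$ on each block, and then defer their simultaneous realization to an unspecified ``greedy stack filter'' whose correctness you yourself flag as unverified. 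Indeed the naive forward version of your filter fails: take $F(z_1)=1$, $F(z_2)=\tau^2$, $F(z_3)=\tau^{-100}$, $F(z_4)=\tau^{900}$; the jump at $i=1$ is accepted, the (huge) jump at $i=3$ is rejected because $F(z_3)<F(z_1)$, and the block $[2,4]$ then has growth $\tau^{898}\gg\tau^{4}$, violating (c). So as written the proposal has a genuine gap at the heart of the argument.

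The paper avoids this combinatorial difficulty altogether with a monotone-envelope trick: set $\Phi(m)=\min_{k\ge m}F(z_k)$, which is nondecreasing, and take $M=\{m\in\{1,\dots,j-2\}:\Phi(m+1)\ge\tau^2\Phi(m)\}$. Then automatically $F(z_m)=\Phi(m)$ for $m\in M$ (and for $m=j-1$), which gives both the strict ordering (\ref{order}) and the big-jump property $F(z_{m+1})\ge\Phi(m+1)\ge\tau^2F(z_m)$; and the telescoping is carried out for $\Phi$ rather than for $F$, using $F\ge\Phi$ at the block left endpoints and $F=\Phi$ at the right ones, together with $\Phi(m+1)\le\tau^2\Phi(m)$ for $m\notin M$. (In particular this $M$ does satisfy your (a)--(c), so your target is attainable, but proving that was the burden.) If you want to salvage your write-up, replace the stack filter by this definition of $M$ via $\Phi$; the rest of your estimates then go through essentially verbatim.
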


\begin{proof}
For $m = 1, \dots , j-1$, define 
\[  \Phi (m) = \min_{k \geq m} F(z_k) . \]
Then $\Phi$ is nondecreasing.  Let 
\[ M = \{  m  \in \{1, 2, \dots , j-2  \}: \Phi (m+1) \geq \tau^2 \Phi (m) \} . \]
Notice that $F(z_m) \geq \Phi (m) $ and $F(z_m) = \Phi (m) $ for $m \in M$.  Hence $F(z_m)$ is increasing for $m \in M$, because $\Phi$ is increasing, so (\ref{order}) holds.  For $m \in M$, 
\[ F(z_{m+1}) \geq \Phi (m+1) \geq \tau^2 F(z_m) . \]
We have
\[   \frac{A}{B} = \sqrt{\frac{d(y, z_1)}{d(x, z_1)}} \left[ \prod_{m \in M} \frac{\sqrt{d(x, z_m) d(y, z_m) d(x, z_{m+1} ) d(y, z_{m+1})}}{d(z_m, z_{m+1})} \right] \sqrt{\frac{d(x, z_{j-1})}{d(y, z_{j-1})}}  .\]
The estimate $F(z_{m+1}) \geq \tau^2 F(z_m)$ means that the conditions of Corollary \ref{Ptolomyinversed} hold for the points $x, y, z_m, z_{m+1}$ for every $m \in M$.  Thus 
\[ d(x,y) d(z_m, z_{m+1}) \geq (1 - \tau^{-1})^{2 \beta} (2 \kappa)^{-4} d(x, z_{m+1}) d( y, z_m) . \]
Hence
\begin{align*}
 \frac{A}{B} &\leq  \left[ \frac{(2 \kappa)^{4}}{(1 - \tau^{-1})^{2 \beta}} \right]^{|M|}  d(x,y)^{|M|} \\
 &\times  \sqrt{\frac{d(y, z_1)}{d(x, z_1)}} \left[ \prod_{m \in M} \sqrt{ \frac{d(x, z_m) d(y, z_{m+1}) }{d(x, z_{m+1}) d(y, z_m)} }\right] \sqrt{\frac{d(x, z_{j-1})}{d(y, z_{j-1})}} .
 \end{align*}
By the equivalence of $d$ and $D^{\beta}$, we can estimate the last quantity by 
\begin{align*}
 \frac{A}{B} &\leq  \left[ (2 \kappa)^2 \right]^{|M|+1} \left[ \frac{(2 \kappa)^{4}}{(1 - \tau^{-1})^{2 \beta}} \right]^{|M|}  d(x,y)^{|M|} \\
 &\times  \left[ \frac{D(y, z_1)}{D(x, z_1)} \left( \prod_{m \in M}  \frac{D(x, z_m) D(y, z_{m+1}) }{D(x, z_{m+1}) D(y, z_m)} \right) \frac{D(x, z_{j-1})}{D(y, z_{j-1})} \right]^{\beta/2} \\
 &=  (2 \kappa)^2  \left[ \frac{(2 \kappa)^{6}}{(1 - \tau^{-1})^{2 \beta}} \right]^{|M|}  d(x,y)^{|M|}  \left[  \frac{1}{F(z_1)} \left( \prod_{m \in M} \frac{F(z_m)}{F(z_{m+1})} \right)  F(z_{j-1}) \right]^{\beta/2}
 \end{align*}
Note that $F(z_1) \geq \Phi (1), F(z_{j-1}) = \Phi(j-1)$, and recall that for every $m \in M$, we have $F(z_m) = \Phi (m)$ and $F(z_{m+1}) \geq \Phi (m+1)$.  Hence we can estimate the product 
\[ \frac{1}{F(z_1)} \left( \prod_{m \in M} \frac{F(z_m)}{F(z_{m+1})} \right)  F(z_{j-1})  \] 
by
\[ \frac{1}{\Phi (1)} \left( \prod_{m \in M} \frac{\Phi(m)}{\Phi (m+1)} \right) \Phi(j-1) = \frac{\Phi(m_1)}{\Phi(1)} \frac{\Phi (m_2)}{\Phi(m_1 +1)} \cdots \frac{\Phi (j-1)}{\Phi(m_{\ell} +1)} . \]
We now observe that the inequality $\Phi(m+1) \leq \tau^2 \Phi (m)$ holds for every $m \in [1, m_1 -1] \cup [m_1 +1, m_2 -1] \cup \dots  \cup [m_{\ell} +1,j-1]  $, i.e., for $m \notin M$, so $\Phi(m_1) \leq \tau^{2 (m_1 -1)} \Phi (1), \Phi(m_2) \leq \tau^{2 (m_2 - m_1 -1)} \Phi (m_1+ 1),$ and so on up to $ \Phi(j-1) \leq \tau^{2 (j-1- m_{\ell} -1)} \Phi( m_{\ell} +1)$.
Therefore the last product does not exceed $\tau^{2(j-2-|M|)}$.  Combining these estimates we get the conclusion of the lemma with 
$ C(\tau, \kappa) = (2 \kappa)^{6}(1 - \tau^{-1})^{-2 \beta}$. 
\end{proof}

\begin{proofof}{Theorem \ref{keythm}.}  Let $F$ and $f$ be defined by (\ref{capitalF}) and (\ref{littlef}) respectively. 
Integrating the estimate (\ref{ABineq}) with respect to $\omega_{j-1}$ and summing over all possible choices of $M$, we arrive at the inequality 
\begin{align*}
 K_j (x,y) & \leq  (2 \kappa)^2  \sum_{\ell = 0}^{j-2} C(\tau, \kappa)^{\ell } \tau^{\beta (j-1-\ell)} d(x,y)^{\ell}  \\
 & \times  \sum_{1 \leq m_1 < m_2 < \cdots < m_{\ell} \leq j-2 } I_j (m_1, m_2 , \cdots , m_{\ell}),
\end{align*}
for $j \geq 2$, where 
\begin{align*} 
& I_j (m_1, m_2 , \cdots , m_{\ell}) \\
& =  \int_{\{ F(z_{m_1}) < F(z_{m_2}) < \cdots < F(z_{m_\ell}) \} }  \\
&  \left( f T^{m_1 -1} f \right) (z_{m_1}) \left(  \prod_{k=1}^{\ell -1}  \left( f T^{m_{k+1} -m_k -1} f \right) (z_{m_{k+1}}) \right)   \left( f T^{j-1 -m_\ell -1} f \right) (z_{j-1})  \\
&  d \omega (z_{m_1}) \cdots d \omega(z_{m_\ell}) \, d\omega (z_{j-1}) .
\end{align*}
Let $\alpha  \in (1, \Vert T \Vert^{-1})$.  Define $S = \sum_{k\geq 0} \alpha ^k T^k$.  Since $f \geq 0$, we have the pointwise inequality
\[  T^k f \leq \alpha^{-k} Sf  \]
for all $k \geq 0$.  Thus 
\begin{align*}
 & I_j (m_1, \dots m_{\ell}) \leq \alpha^{-(j-2 - \ell)} \int_{\{ F(z_{m_1}) < F(z_{m_2}) < \cdots < F(z_{m_\ell}) \} } \\
& g(z_{m_1}) \cdots g(z_{m_{\ell}}) g(z_{j-1}) \ d \omega (z_{m_1}) \cdots d \omega(z_{m_\ell}) \, d\omega (z_{j-1}) \\
& \leq \frac{\alpha^{-(j-2- \ell)}}{\ell !} \Vert g \Vert_{L^1 (\omega)}^{\ell +1},
\end{align*}
where $g = fSf$.  The last inequality is true because the integral is invariant with respect to the permutations of $m_1, \dots , m_{\ell}$ in the domain of integration.

Note that 
\[ \Vert g \Vert_{L^1 (\omega)} \leq \Vert S \Vert \Vert f \Vert_{L^2 (\omega)}^2= \Vert S \Vert K_2 (x,y) . \]
Putting these estimates together, we obtain
\[ K_j (x,y) \leq (2 \kappa)^2 K_2 (x,y) \sum_{\ell = 0}^{j-2} {j-2 \choose \ell} (\tau^{\beta} \alpha^{-1})^{j-2- \ell} \frac{C(\tau, \kappa)^{\ell}}{\ell !} \left( \frac{K_2 (x,y)}{K(x,y)} \right)^{\ell} . \]

If $0 < \rho < 1$, then for each $\ell \geq 0$, 
\begin{equation}
 \sum_{j = \ell}^{\infty} {j \choose \ell} \rho^{j-\ell} = \frac{1}{(1-\rho)^{\ell +1}} , \label{diffgeom}
 \end{equation}
by differentiating $(1-\rho)^{-1} = \sum_{j=0}^{\infty} \rho^{j} $ a total of $\ell$ times. Select $\tau>1$ such that $\rho \equiv \tau^{\beta} \alpha^{-1} <1$.  Using (\ref{diffgeom}),
\[ \sum_{j=2}^{\infty} K_j (x,y) \leq \frac{(2 \kappa)^2}{1- \tau^{\beta} \alpha^{-1} } K_2 (x,y) \exp \left( \frac{C(\tau, \kappa)}{ 1 -\tau^{\beta} \alpha^{-1} } \, \frac{ K_2 (x,y)}{K(x,y)}  \right) . \]
To complete the proof of the upper bound in (\ref{mainineq}), it remains only to use the elementary inequality $1 + CV e^{CV} \leq e^{2 CV}$, valid for all $C,V>0$.
\end{proofof}

\begin{Rem}\label{RemarkFV1}{\rm 
Theorem \ref{keythm} is applicable 
to the discrete model of the Schr\"{o}dinger equation considered 
in \cite{FV1}. 
Let $\omega$ be a Borel measure on $\R^n$, and let
$\mathcal{Q}$ denote the family of dyadic cubes in $\R^n$.  For a sequence
$s=\{s_{Q} \}_{Q \in \mathcal{Q}}$ of positive scalars, we
consider an operator $T$ defined by (\ref{defT}) with kernel 
\[
 K(x,y) = \sum_{Q  \in
\mathcal{Q} } \frac{s_Q}{\omega (Q)} \chi_Q (x) \chi_Q (y), 
\]  
where the sum is taken over all dyadic cubes $Q$ such that 
$\omega(Q) \not=0$. 
This is a quasi-metric kernel with constant $\kappa=1$ (moreover, 
$d(x,y)=1/K(x,y)$ is an ultra-metric; that is, $d(x,y) \leq \max (d(x,z), d(z,y)  )$). Note that, for $g \in L^2(\omega)$,  
\[
\langle Tg, g \rangle = \sum_{Q  \in\mathcal{Q} }\frac{s_Q}{\omega (Q)} 
\left (\int_Q g d \omega\right)^{2} \le ||T|| \cdot ||g||^2_{L^2(\omega)}. 
\]
Define the discrete Carleson norm of $s=\{s_{Q} \}_{Q \in \mathcal{Q}}$ by 
\[ \Vert s \Vert_{\omega} = \sup_{Q \in \mathcal{Q}}
\omega(Q)^{-1} \sum_{P \in \mathcal{Q}: P \subseteq Q} s_P
\, \omega ( P ).\] 
Then $\Vert s \Vert_{\omega} \le ||T|| \le 4 \Vert s \Vert_{\omega}$, 
where the constant $4$ is sharp (see \cite{NTV}, Theorem 3.3). 
Consequently, by Theorem~\ref{keythm} estimate (\ref{mainineq})  holds if $\Vert s \Vert_{\omega} 
< \frac 1 4$, where the constant $\frac 1 4$ is sharp as well. Indeed,  (\ref{mainineq}) yields $||T|| \le 1$ by Schur's lemma, and so $\frac 1 4$ cannot be replaced by any larger constant in 
view of Theorem 3.3 in \cite{NTV}.

Such estimates 
of the corresponding Green's function were 
obtained earlier in \cite{FV1} by a different method,  with  
$\frac 1 {12}$ in place of $\frac 1 4$,  along with estimates of solutions to the discrete Schr\"{o}dinger equation $u=Tu +f$. }
\end{Rem}

\bgp

\section{Further Results on Quasi-metric and Quasi-metrically Modifiable Kernels}\label{quasimetric}

\bgp

Let $T$ be defined by (\ref{defT}) where $K: \Omega \times \Omega \to (0, +\infty]$ is a non-negative kernel. The minimal positive solution $u_0$ of the equation $u = Tu +1$ is obviously given by $u_0 = 1 + \sum_{j=1}^{\infty} T^j 1$.  Our next result is a bilateral pointwise estimate of $u_0$.  In the case where $K$ is the Green's function $G$ of $(-\Delta)^{\alpha/2}$, the function $u_0 = \mathcal{G} 1$ is of interest in the study of Schr\"{o}dinger equations.

\begin{Thm}\label{T1ests} Let $(\Omega, \omega)$ be a $\sigma$-finite measure space. 
Suppose that $K$ is a quasi-metric kernel with constant $\kappa$ on $(\Omega, \omega)$,   
and  $T$ is the corresponding integral operator. Then there exists $c=c (\kappa)>0$ such that the minimal positive solution $u_0$ of the equation $u = Tu + 1$
satisfies 
\begin{equation}
u_0  \geq  e^{c T1}. \label{T1lowbnd}
\end{equation}
If $\Vert T \Vert < 1$,
then there exists $C=C(\kappa, \Vert T \Vert)>0$ such that
\begin{equation}
 u_0  \leq e^{C T 1}.  \label{T1upperbnd}
\end{equation}
\end{Thm}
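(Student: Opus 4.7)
The idea is to integrate the pointwise bounds of Theorem~\ref{keythm} against $\omega$ in the second variable, exploiting the identity
\[ u_0(x) - 1 = \sum_{j \ge 1} T^j 1(x) = \int_{\Omega} H(x, y)\, d\omega(y). \]
An auxiliary estimate that will feed both directions is the pointwise bound
\[ \frac{K_2(x,y)}{K(x,y)} \le \kappa\bigl(T1(x) + T1(y)\bigr), \]
obtained by writing the left side as $\int d(x,y)/(d(x,z)\,d(z,y))\,d\omega(z)$ and using the quasi-triangle inequality $d(x,y) \le \kappa(d(x,z)+d(z,y))$ to split the integrand into $\kappa/d(z,y) + \kappa/d(x,z)$.

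For the lower bound (\ref{T1lowbnd}), rather than integrating $H(x,y) \ge K(x,y) e^{c K_2/K}$ directly (which only yields a bound involving $T^2 1/T1$ after Jensen's inequality), I would run the sorting argument of Lemma~\ref{keylowest} directly on the chain integral defining $T^j 1(x)$. Fixing $x$, I restrict to the simplex where $d(x,z_1) \le \dots \le d(x,z_j)$. On this region the quasi-triangle inequality applied to consecutive pairs gives $K(z_{i-1},z_i) \ge K(x,z_i)/(2\kappa)$, so the integrand is at least $(2\kappa)^{-(j-1)} \prod_{i=1}^{j} K(x,z_i)$, which is symmetric in the $z_i$'s. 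Since the $j!$ ordered regions are essentially disjoint, this yields
\[ T^j 1(x) \ge \frac{(T1(x))^j}{j!\,(2\kappa)^{j-1}}. \]
Summing over $j \ge 0$ and using $2\kappa \ge 1$, one gets $u_0(x) \ge 1 + 2\kappa(e^{T1(x)/(2\kappa)}-1) \ge e^{T1(x)/(2\kappa)}$, which is (\ref{T1lowbnd}) with $c = 1/(2\kappa)$.

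For the upper bound (\ref{T1upperbnd}), integrating $H(x,y) \le K(x,y) e^{C_0 K_2/K}$ from Theorem~\ref{keythm} and inserting the auxiliary pointwise estimate yields
\[ u_0(x) - 1 \le e^{C_0 \kappa T1(x)}\, T\!\bigl(e^{C_0 \kappa T1}\bigr)(x). \]
The main obstacle is to dominate the remaining factor $T(e^{a T1})(x)$ pointwise by $e^{b\,T1(x)}$ for appropriate constants $a,b$. I expect this to be carried out by a direct adaptation of the inverse-Ptolemy argument of Theorem~\ref{keythm}, this time applied to the chain integral defining $T^j 1(x)$ itself: sort the intermediate variables by $D$-distance from $x$ (via Lemma~\ref{metricequivalent}), identify the ``jump'' indices along the chain where consecutive $D$-distances multiply by at least $\tau^2$, apply Corollary~\ref{Ptolomyinversed} to those indices, and on the intermediate segments use the comparison operator $S = \sum_{k\ge 0} \alpha^k T^k$ with $\alpha \in (1, \Vert T\Vert^{-1})$ exactly as in the proof of Theorem~\ref{keythm} to reduce to the $L^1$-norm of $g = f\,Sf$. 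Assembling the combinatorial sum and summing in $j$ should produce $T^j 1(x) \le (\mathrm{const})\,(C'T1(x))^j/j!$ with summable corrections, giving (\ref{T1upperbnd}) with $C = C(\kappa, \Vert T\Vert)$.
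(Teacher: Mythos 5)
Your lower bound argument is correct, and it is a genuinely different (more direct) route than the paper's: restricting the chain integral for $T^j1(x)$ to the region $d(x,z_1)\le\cdots\le d(x,z_j)$, using $d(z_{i-1},z_i)\le 2\kappa\, d(x,z_i)$, and symmetrizing does give $T^j1(x)\ge (2\kappa)^{-(j-1)}(T1(x))^j/j!$, and with $2\kappa\ge 1$ this sums to (\ref{T1lowbnd}) with $c=1/(2\kappa)$ (after the usual reduction to $\min(K,n)$ to avoid $d=0$). This is essentially the one-reference-point analogue of Lemma \ref{keylowest} and is fine.

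The upper bound, however, is where the real content lies, and there you have a genuine gap: you reduce matters to dominating $T(e^{aT1})$ pointwise by $e^{bT1}$, or equivalently to a termwise bound on $T^j1(x)$, and you only ``expect'' an adaptation of the inverse-Ptolemy argument to deliver $T^j1(x)\le \mathrm{const}\,(C'T1(x))^j/j!$. That target inequality is false as stated: take $K\equiv 1$ (so $d\equiv 1$, $\kappa=1/2$) and $\omega(\Omega)=\theta<1$; then $T^j1=\theta^j$ while $(T1)^j/j!=\theta^j/j!$, so no factorial decay holds, and any correct version must carry the binomial-with-geometric-factor structure of the proof of Theorem \ref{keythm}, which in turn needs a substitute for the two-point quantities $F(z)=D(x,z)/D(y,z)$, $f(z)=(d(x,z)d(y,z))^{-1/2}$ (whose $L^2(\omega)$-norm squared is $K_2(x,y)$), and the four-point Corollary \ref{Ptolomyinversed}; none of this is supplied by sorting by $D(x,\cdot)$ alone. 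The paper avoids redoing any of this by a short trick that you are missing: assuming first that $D=\sup_{x,y}d(x,y)<\infty$, adjoin a point $z\notin\Omega$ with $d^*(x,z)=D$ for all $x\in\Omega$ and $\omega^*(\{z\})=0$; then $d^*$ is a quasi-metric with constant $\max(\kappa,1)$, the iterated kernels are unchanged on $\Omega\times\Omega$, and one checks $K_2^*(x,z)/K^*(x,z)=T1(x)$ and $u_0(x)=D\sum_{j\ge1}K_j^*(x,z)$, so both bounds of Theorem \ref{keythm} applied on $\Omega^*$ at the pair $(x,z)$ give (\ref{T1lowbnd}) and (\ref{T1upperbnd}) at once; the unbounded case follows by exhausting $\Omega$ by $\Omega_n=\{d(x,x_0)<n\}$ and monotone convergence. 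Without either this device or a fully executed one-point version of the combinatorial argument (with the correct, non-factorial form of the termwise bounds), your upper bound remains a plausible plan rather than a proof.
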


\begin{proof}  We first consider the case where $\Omega$ is bounded with respect to 
$d = 1/K$, that is, when $D=\sup_{x, y \in \Omega} d(x,y)<+\infty$. We will  add 
a point $z$ to $\Omega$ which is far away from all other points. That is, we choose $z \not \in\Omega$ and consider the space $\Omega^* = \Omega \,  \cup \{z\}$ with quasi-metric  $d^*$ 
defined by  $d^* (x,y) = d(x,y) $ if $x,y \in \Omega$,  $d^*(x,z)= d^*(z,x)= D$ for all $x \in \Omega$, 
and $d(z,z)=0$.  Then $d^*$ is a quasi-metric on $\Omega^*$ with quasi-metric constant $\kappa^* = \max (\kappa, 1)$.  We also extend $\omega$ to a measure $\omega^*$ on $\Omega^*$ by setting $ \omega^*\vert_{\Omega} = \omega$, and $\omega^* (\{z\}) = 0$.  

Note that the iterates $K^*_j$ of $K^* = 1/d^* $ with respect to $\omega^*$ agree with the iterates $K_j$ of $K$ with respect to $\omega$ on $\Omega \times \Omega$ since  
$\omega^* (\{ z \}) =0$, and that the norm of the integral operator with the kernel $K^*$ 
on $(\Omega^*, \omega^*)$ is the same as $||T||$. 

For all $x \in \Omega$,
\[ \frac{K_2^* (x,z)}{K^*(x,z)} = D \int_{\Omega^*} K^*(x,y) K^*(y,z) \, d \omega^* (y) =  \int_{\Omega} K(x,y) \, d \omega (y) =  T1 (x)  \]
and
\[ u_0 (x) = 1 + \sum_{j=1}^{\infty} \int_{\Omega}  K_j (x,y) \, d \omega (y) = 1 + D \sum_{j=1}^{\infty}  \int_{\Omega^*}  K_j^* (x,y) K^*(y,z) \, d\omega^* (y) \]
\[ = D K^*(x,z)  + D \sum_{j=2}^{\infty} K_{j}^* (x,z) =  D \sum_{j=1}^{\infty} K_j^* (x,z) . \]
Hence, applying the lower estimate in Theorem \ref{keythm} on the space $\Omega^*$ we get, 
for all $x \in \Omega$,
 \begin{equation}
 u_0 (x) \geq D K^*(x,z) e^{c K_2^* (x,z)/K^*(x,z)} = e^{c T1 (x)}.  \label{u0lowest}
\end{equation}
Similarly, the upper estimate in Theorem \ref{keythm} gives, for all $x \in \Omega$, 
\begin{equation}
 u_0 (x) \leq D K^*(x,z) e^{C K^*_2 (x,z)/K^*(x,z)} = e^{C T1 (x)}. \label{u0upest}
 \end{equation}
  
For $\Omega$ not bounded with respect to $d$, select $x_0 \in \Omega$ and let $\Omega_n = \{ x \in \Omega : d(x, x_0) <n \}$.  Let $\omega_n $ be the restriction of $\omega$ to $\Omega_n$, and let $d_n$ and  $K^{(n)}$ be the restrictions of $d$ and $K$ to $\Omega_n \times \Omega_n$ respectively.  Then $K^{(n)}$ is a quasi-metric kernel on $\Omega_n$.  The corresponding integral operator 
  $T_n$ defined by 
\[ T_n f(x) = \int_{\Omega_n} K^{(n)} (x,y) f(y) \, d \omega_n (y) = \int_{\Omega} K(x,y) \chi_{\Omega_n} (y) \, d \omega (y) \]
satisfies 
\[ \Vert T_n \Vert_{L^2 (\Omega_n) \rightarrow L^2 (\Omega_n) } \leq \Vert T \Vert_{L^2 (\Omega) \rightarrow L^2 (\Omega)} ,  \] 
and $T_n 1 \to T 1$ pointwise as $n \to \infty$. 

Let $T^{j}_n$ be the $j^{th}$ iterate of $T_n$ and let $K^{(n)}_j$ be the kernel of $T^{j}_n$.  Then $K^{(n)}_j (x,y) $ is non-decreasing in $n$ and converges to $K_j (x,y)$ pointwise  as $n \rightarrow \infty$, for each $j \in \N$,  by the monotone convergence theorem.  Let $u_0^{(n)} = 1 + \sum_{j=1}^{\infty} T^{j}_n1$.  By the monotone convergence theorem, $ u_0^{(n)} \to  u_0$ pointwise as $n \to \infty$. 

Applying the estimates  for the bounded space $\Omega_n$, and passing to the limit as $n \to \infty$, we see that estimates (\ref{T1lowbnd}) and (\ref{T1upperbnd}) hold in the unbounded case as well.
\end{proof}

\vspace{0.1in}

We now turn to  characterizing the kernels for which
$\sum_{j=1}^{\infty} K_j (x,y)$ is pointwise equivalent to
$K(x,y)$.

\begin{Thm}\label{charVeq}
Let $(\Omega, \omega)$ be a $\sigma$-finite measure space. Suppose
$K: \Omega \times \Omega \rightarrow (0, + \infty]$ is a 
quasi-metric kernel with constant $\kappa$, and that $K$ is not identically $\infty$.  Let $T$ be the integral operator corresponding to $K$ and let $u_0$ be the minimal positive solution of the equation $u=Tu +1$. Then the following statements are equivalent:

\vspace{0.1in}

(a)  There exists $C_1>0$ such that $\sum_{j=1}^{\infty} K_j (x,y) \leq C_1 K(x,y)$ for all $x,y \in \Omega$. 

\vspace{0.1in}

(b)  $\Vert T \Vert <1$ and $K_2 (x,y) \leq C_2 K(x,y)$ for all $x,y \in \Omega$, for some $C_2>0$ (or, equivalently, $\sup_{x \in \Omega} T1(x)< +\infty$).

\vspace{0.1in}

(c)  $\sup_{x \in \Omega} u_0(x)< +\infty$. 

\end{Thm}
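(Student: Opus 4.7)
The plan is to establish the equivalences cyclically in the order (a) $\Rightarrow$ (b) $\Rightarrow$ (c) $\Rightarrow$ (a), with a separate verification of the parenthetical equivalence inside (b) between ``$K_2(x,y) \le C_2 K(x,y)$ for all $x,y \in \Omega$'' and ``$\sup_{x \in \Omega} T1(x) < +\infty$''. Everything rests on Theorems \ref{keythm} and \ref{T1ests} together with Schur's lemma for symmetric nonnegative kernels: if a measurable $\varphi > 0$ satisfies $T\varphi \le \lambda \varphi$ pointwise, then $\Vert T \Vert_{L^2(\omega) \to L^2(\omega)} \le \lambda$.

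For (a) $\Rightarrow$ (b) I take $K_2 \le H \le C_1 K$ directly, and for the operator-norm bound I fix any $y_0 \in \Omega$ and test Schur against $\varphi(x) = H(x,y_0)$, which is everywhere positive since $H \ge K > 0$. The identity
\[
T \varphi(x) = \int_{\Omega} K(x,z) H(z,y_0) \, d\omega(z) = H(x,y_0) - K(x,y_0),
\]
combined with (a) rewritten as $K(\cdot,y_0) \ge C_1^{-1} H(\cdot,y_0)$, gives $T\varphi \le (1 - C_1^{-1})\varphi$, so $\Vert T \Vert \le 1 - C_1^{-1} < 1$. For (b) $\Rightarrow$ (c), Theorem \ref{keythm} gives $H \le K \, e^{C C_2}$; integrating against $d\omega(y)$ produces $u_0 \le 1 + e^{C C_2} T1$, which is bounded since $T1$ is bounded by the parenthetical form of (b). For (c) $\Rightarrow$ (a), $u_0 \le M$ forces $T u_0 = u_0 - 1 \le (1 - M^{-1}) u_0$, Schur then gives $\Vert T \Vert \le 1 - M^{-1} < 1$, and $T1 \le T u_0 \le M - 1$ is bounded; thus (b) holds and Theorem \ref{keythm} delivers (a).

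It remains to verify the parenthetical equivalence inside (b). The direction ``$T1$ bounded $\Rightarrow K_2 \le C_2 K$'' is purely geometric: split the integral defining $K_2(x,y)$ on $\{d(x,z) \le d(z,y)\}$ and $\{d(x,z) > d(z,y)\}$, note that on the first set the quasi-triangle inequality yields $d(z,y) \ge d(x,y)/(2\kappa)$ and hence $K(z,y) \le 2\kappa K(x,y)$, and symmetrically on the second, so $K_2(x,y) \le 2\kappa K(x,y)(T1(x) + T1(y)) \le 4\kappa \Vert T1 \Vert_\infty K(x,y)$. The converse ``$K_2 \le C_2 K \Rightarrow T1$ bounded'' is the step I expect to demand real care: Theorem \ref{keythm} yields $u_0 \le 1 + e^{C C_2} T1$, while the universal lower bound $u_0 \ge e^{c T1}$ supplied by Theorem \ref{T1ests} (whose derivation via Lemma \ref{keylowest} uses no assumption on $\Vert T \Vert$) forces the pointwise inequality $e^{c T1(x)} \le 1 + e^{C C_2} T1(x)$, and exponential growth versus linear growth then bounds $T1$. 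The main obstacle is exactly this sandwich: no purely one-sided argument seems to suffice, and extracting boundedness of $T1$ from the pointwise bound $K_2 \le C_2 K$ forces us to pair the sharp Neumann upper bound of Theorem \ref{keythm} with its universal lower bound.
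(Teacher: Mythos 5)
Most of your cycle is fine, and your route to $\Vert T\Vert<1$ from (a) (the Schur test with $\varphi=H(\cdot,y_0)$, using $T\varphi=H(\cdot,y_0)-K(\cdot,y_0)\le(1-C_1^{-1})\varphi$) is a legitimate and arguably more elementary alternative to the paper's spectral-radius argument via $\Vert T^n\Vert\le\Vert T+\cdots+T^n\Vert\le C$; you only need to add the check that $H(\cdot,y_0)<\infty$ $\omega$-a.e., which does follow from (a) but is not automatic since $K$ may take the value $+\infty$ (if the set $\{x:K(x,y_0)=\infty\}$ had positive measure, $K_2(x_1,y_0)=\infty$ for a point $x_1$ with $K(x_1,y_0)<\infty$, contradicting (a)).

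The genuine gap is in the step you yourself flag as the hard one: deducing $\sup_x T1(x)<+\infty$ from $K_2\le C_2K$. Your sandwich $e^{cT1(x)}\le u_0(x)\le 1+e^{CC_2}T1(x)$ only produces a bound at points where $T1(x)$ is \emph{finite}; at a point with $T1(x)=+\infty$ both sides are $+\infty$ and the exponential-versus-linear comparison says nothing, so the argument cannot rule out $T1\equiv+\infty$ on part (or all) of $\Omega$ (note also that when $T1(x)=+\infty$ your intermediate bound $u_0(x)\le 1+e^{CC_2}T1(x)$ is vacuous, so even the finiteness of $u_0$ is not recovered). Excluding this is precisely the content of the implication, and the paper does it by a direct geometric argument: fixing $x$, one tests $K_2(x,y)\le C_2K(x,y)$ against a far-away point $y$ (if $\sup_y d(x,y)=D<\infty$, take $d(x,y)>D/2$, so $K(y,z)\ge\frac1{3\kappa}K(x,y)$ for all $z$ and $T1(x)\le3\kappa C_2$; if $\sup_y d(x,y)=\infty$, take $r_n=d(x,y_n)\to\infty$ and bound $\int_{B(x,r_n)}K(x,z)\,d\omega(z)\le2\kappa C_2$, then let $n\to\infty$). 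This argument handles the value $+\infty$ automatically and, unlike your route, uses no hypothesis on $\Vert T\Vert$ — a second drawback of your approach is that it invokes the upper bound of Theorem \ref{keythm}, so even where it works it proves the parenthetical equivalence in (b) only under $\Vert T\Vert<1$, whereas the paper establishes it unconditionally from the quasi-metric inequality alone (the converse direction, $T1$ bounded $\Rightarrow K_2\le2\kappa K(T1(x)+T1(y))$, you do essentially as the paper does). To repair your proof you should replace the sandwich by this kind of geometric test-point argument; approximating by truncated kernels $\min(K,n)$ does not obviously restore the hypothesis $K_2\le C_2K$ for the truncations, so it is not a quick fix.
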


\begin{proof} 
We first show that the condition $K_2 \leq C_2 K$ is equivalent to the boundedness of $T1$.  Notice that by the quasi-metric 
property of $K$,  
\[
K(x, z) K(y,z) \le \kappa K(x,y) [ K(x, z) +  K(y,z)]. 
\]
Hence, 
\begin{align*}
K_2(x,y) & =\int_\Omega K(x, z) K(y,z) d \omega (z)\\ & \le  \kappa K(x,y) \int_{\Omega} K(x, z) d \omega (z)
 + \kappa K(x,y) \int_{\Omega} K(y,z) d \omega (z)\\ &
= \kappa K(x,y) [ T1(x) + T1(y)],
\end{align*}
so that the boundedness of $T1$ implies that $K_2 \leq C_2 K$. 

Now suppose $K_2 \leq C_2 K$.  Fix $x \in \Omega$.  Suppose first that  

(1) $0< \sup_{y \in \Omega} d(x,y)=D< +\infty$. Pick any $y \in \Omega$ with $d(x,y)> \frac {D}{2}$. Then 
\[
d(y,z) \le \kappa [d(x,y)+d(x,z) ] \le 3 \kappa d(x,y)
\]
for all $z \in \Omega$, and so $K(y,z) \ge \frac {1}{3 \kappa} K(x, y)$.  It follows that 
\[
K_2(x,y) =\int_\Omega K(x, z) K(y,z) d \omega (z)\ge \frac {1}{3 \kappa} K(x, y) T1(x),  
\]
whence 
$T1(x) \le 3 \kappa C_2$. 

Now suppose that

(2) $\sup_{y \in \Omega} d(x,y)=+\infty$. Then there is a sequence $y_n \in \Omega$ such that 
$0<r_n=d(x, y_n) \to +\infty$ as $n \to \infty$. For every $z \in B(x, r_n)$, we have 
\[
d(y_n,z) \le \kappa [d(x,y_n)+d(x,z) ] \le 2 \kappa d(x, y_n).  
\]
Hence, 
\begin{align*}
K_2(x, y_n) & \ge \int_{B(x, r_n)} K(x,z) K(y_n,z) d \omega (z)\\
 & \ge \frac{1}{2 \kappa} 
K(x, y_n) \int_{B(x, r_n)} K(x,z) d \omega(z),  
\end{align*}
and consequently $\int_{B(x, r_n)} K(x,z) d \omega(z)\le 2 \kappa C_2$ for all $n \in \N$. 
Passing to the limit as $n \to \infty$, we get $T1(x) \le 2 \kappa C_2$. 

Now Theorem \ref{keythm} shows that (b) implies (a), and Theorem \ref{T1ests} shows that (b) implies (c).

If $u_0$ is bounded by $C$, then by Theorem \ref{T1ests}, $T1$ is bounded.  From $u_0 = Tu_0 +1$, we obtain
\[ Tu_0 = u_0 -1 \leq \left(1 - \frac{1}{C} \right) u_0. \] 
Hence $\Vert T \Vert \leq 1- 1/C <1$, by Schur's Lemma.  So (c) implies (b).

It remains to show that (a) implies (b).  Condition (a) trivially implies $K_2 \leq C_2 K$, and hence we have that $T1$ is bounded.  It remains to  show that (a) implies $||T||<1$.  Since the kernel of $T$ is positive,  $T$ is bounded on $L^\infty(\omega)$, and by duality on $L^1(\omega)$. Thus by interpolation $T$  is a bounded operator on $L^2(\omega)$. Comparing kernels and using (a), there exists $C$ so that for all $n$, 
\[ \Vert T^n \Vert \leq \Vert T + T^2 + \cdots + T^n \Vert \leq C . \]
Since the kernel of $T$ is symmetric, $T$ is self-adjoint, so $||T||$ coincides with the spectral radius $r(T)$ on $L^2(\omega)$.  Hence it suffices to show that $\Vert T^n \Vert <1$ for some $n$.  If not, given $n$ we can select $f$ such that $\Vert f \Vert =1$ and $\Vert T^n f \Vert > 1/2$.  We can assume $f \geq 0$. Then for all $m \leq n$
\[ \Vert T^m f \Vert \geq \frac{1}{2C} . \]
Then
\[ \Vert (T + T^2 + \cdots +T^n) f \Vert^2 \geq \sum_{j=1}^n \Vert T^j f \Vert^2 \geq \frac{n}{4C^2}, \]
since all inner products in the expansion of the left side are non-negative.  For $n$ large enough, this inequality contradicts $\Vert T + T^2 + \cdots + T^n \Vert \leq C$.

\end{proof}

{\bf Remarks.} 
1.  The condition $\sup_{x \in \Omega} T1(x)< +\infty$ can be expressed in a ``geometric form'' 
\[\sup_{x \in \Omega} \int_{0}^{+\infty} \frac{\omega(B(x,t))}{t^2}dt< + \infty,\]
where $B(x,t) = \{ y \in \Omega: d(x,y)<t\}$.  Indeed,
\[ T1(x)=\int_\Omega K(x, y) d \omega (y) = \int_\Omega \frac{d\omega(y)}{d(x,y)}\]
\[ =    \int_{ \Omega} \int_{d(x,y)}^{\infty}  \frac{dt \, d\omega(y) }{t^2}=\int_{0}^{+\infty} \frac{\omega(B(x,t))}{t^2}dt.    \]

      \vspace{0.1in}
   
   2. The condition $\sup_{x \in \Omega} T1(x)< +\infty$ can be replaced with \newline $||T1||_{L^\infty(\omega)} < + \infty$, 
   which in its turn is equivalent to $||T||_{L^1(\omega) \to L^1(\omega)} < + \infty$. 
   
   Indeed, let $ E=\{x: T1(x) \le ||T1||_{L^\infty(\omega)} \}$. Then $\omega(\Omega\setminus E)=0$, so $E$ 
   is non-empty. Fix any point $x \in \Omega$. Then the following two cases are possible: (i) $\inf_{y\in E} 
   d(x,y)=0$. In this case, for every $\epsilon>0$, there exists $y \in E$ such that 
   $d(x,y)< \epsilon$, and therefore  $d(y,z)\le \kappa \left ( d(x,z)+\epsilon \right)$, whence 
   \[
   \int_{\Omega} \frac{d \omega(z)}{d(x,z) + \epsilon} \le \kappa T1(y) \le \kappa ||T1||_{L^\infty(\omega)}.
   \]
   Passing to the limit as $\epsilon\to 0$, we obtain $ T1(x) \le \kappa ||T1||_{L^\infty(\omega)}$. 
   
   (ii) $D=\inf_{y\in E} d(x,y) >0$. Then choose any point $y \in E$ with 
   $d(x,y) \le 2D$. Note that for all $z \in E$, 
   \[
   d(y,z) \le \kappa \left ( d(x,y) + d (x,z)\right) \le \kappa \left (2D + d (x,z)\right)\le 3 \kappa d(x,z).
   \] 
   Thus in this case 
   \[
   T1(x) =\int_{E}\frac{d \omega(z)}{d(x,z)} \le 3 \kappa  \int_{E}\frac{d \omega(z)}{d(y,z)} 
   =3 \kappa T1(y)\le 3 \kappa ||T1||_{L^\infty(\omega)}.
   \]
      \vspace{0.1in}
      
      3. Let $\mathcal{B}$ denote the space of all bounded 
functions on $\Omega$ with norm $||u||_{\mathcal{B}} = \sup_{x \in \Omega} |u(x)|$. 
Suppose $T$ is an integral operator with quasi-metric kernel.  Clearly $T: \mathcal{B} \to \mathcal{B}$ is bounded if and only if $\sup_{x \in \Omega} T1(x)< +\infty$.  
Under this additional assumption, W. Hansen \cite{H}  showed that condition (a) 
of Theorem~\ref{charVeq} is equivalent 
to $r(T)_{\mathcal{B}}<1$, where $r(T)_{\mathcal{B}}$ is the spectral radius of  $T$ in $\mathcal{B}$. This result is a consequence of Theorem~\ref{charVeq} above.  Moreover, 
for operators $T$ with quasi-metric kernels which are bounded on $\mathcal{B}$, we have $r(T)_{\mathcal{B}}=||T||$. 

Indeed,  $||T||_{\mathcal{B} \to \mathcal{B}} \ge ||T||_{L^\infty (\omega) \to L^\infty (\omega)} 
=||T||_{L^1(\omega) \to L^1(\omega)}$. Using interpolation, 
and the formula $r(T)_{\mathcal{B}}=\lim_{n \to \infty} ||T^n||_{\mathcal{B} \to \mathcal{B}}^{1/n}$ we see that $r(T)_{\mathcal{B}}\ge r(T)_{L^2(\omega)}=||T||$.  By an argument similar to that used in the proof of Theorem \ref{charVeq}  (with $\mathcal{B}$, or $L^\infty(\omega)$,  in place of $L^2(\omega)$) it follows that (a) implies 
$r(T)_{\mathcal{B}}<1$. Thus, the condition $r(T)_{\mathcal{B}}<1$ is equivalent to 
$||T||<1$ for operators $T$ with quasi-metric kernels bounded on $\mathcal{B}$. To prove that $r(T)_{\mathcal{B}} = ||T||$, it remains to notice that, for $\epsilon >0$, the operator $T_\epsilon =(||T|| + \epsilon)^{-1} T$ satisfies $||T_\epsilon||<1$, and hence 
$r(T_\epsilon)_{\mathcal{B}} <1$, which yields $r(T)_{\mathcal{B}} < ||T|| + \epsilon$. Conversely, $S_\epsilon=( r(T)_{\mathcal{B}} + \epsilon)^{-1} T$ satisfies 
$r(S_\epsilon)_{\mathcal{B}} <1$ which gives $||S_\epsilon||<1$, that is,  $||T|| < r(T)_{\mathcal{B}} + \epsilon$. 
Letting $\epsilon \to 0$ yields $r(T)_{\mathcal{B}} = ||T||$. 
   \vspace{0.1in}

%
%
   
   \vspace{0.1in}
\bgp

We will now extend our results to a wider class of quasi-metrically modifiable kernels.  It turns out that in many interesting applications the quasi-metric property fails but the quasi-metric modifiability holds.  Let $K$ be a quasi-metrically modifiable kernel on a measure space $(\Omega, \omega)$, with modifier $m$, so that $\widetilde{K}(x,y)=K(x,y)/(m(x) m(y))$ is a quasi-metric kernel.

We also consider the measure $d \widetilde{\omega} = m^2 d \omega$ and the operator $\tilde{T}$ defined by 
\[ \widetilde{T} f(x) = \int_{\Omega} \widetilde{K} (x,y) f(y) \, d \tilde{\omega} (y).  \]
Various properties of a quasi-metrically modifiable kernel $K$ and the corresponding integral operator $T$ can be reduced to 
those of $\widetilde{K}$ and $\widetilde{T}$. The following properties are straightforward, and we leave their proofs to the reader. 

   \vspace{0.1in}
(a) $\widetilde{K}_j(x,y) = \frac{K_j(x,y)}{m(x) m(y)}$ for all $j \in \mathbb{N}$.    \vspace{0.1in}

(b) If $f \in L^2(\omega)$, then $\widetilde{f} = \frac{f}{m} \in L^2 (\widetilde{\omega})$. 
   \vspace{0.1in}

$($c$)$ $\widetilde{T}^j\widetilde{f}= \frac{T^jf}{m}$, for all $j \in \N$.
   \vspace{0.1in}

(d) $\widetilde{T}^j 1= \frac{T^jm}{m}$, for all $j \in \N$. 
    \vspace{0.1in}

(e) $||\widetilde{T}||_{L^2 (\widetilde{\omega})} = ||T||_{L^2 (\omega)}$.
   \vspace{0.1in}

   \vspace{0.1in}

Applying Theorem~\ref{keythm} to $\widetilde{K}$ and $\widetilde{T}$, and 
rewriting the conclusions in terms of $K$ and $T$, we deduce that Theorem~\ref{keythm} 
remains valid verbatim for quasi-metrically modifiable kernels, which we state 
as the following corollary.  

\begin{Cor}\label{modifiableest}
Let $(\Omega, \omega)$ be a $\sigma$-finite measure space, and let $K$ be a quasi-metrically modifiable kernel on $\Omega$ with constant $\kappa$.  Let $K_1 =K$ and define $K_j$ by (\ref{defkj}) for $j \geq 2$. Then there exists $c>0$, depending only on $\kappa$, such that (\ref{Hlowest}) holds.  

Define $T$ by (\ref{defT}).  If $\Vert T \Vert_{L^2 (\omega) \rightarrow L^2 (\omega)} < 1,$ then there exists $C>0$, depending only on $\kappa$ and $\Vert T \Vert$, such that 
\[ \sum_{j=1}^{\infty} K_j (x,y) \leq  K(x,y) e^{C K_2 (x,y)/K(x,y)} . \]
\end{Cor}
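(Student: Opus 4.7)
The plan is to reduce directly to Theorem~\ref{keythm} via the substitutions indicated just before the corollary. Given $K$ quasi-metrically modifiable with modifier $m$, set $\widetilde{K}(x,y) = K(x,y)/(m(x)m(y))$, $d\widetilde{\omega} = m^2 d\omega$, and let $\widetilde{T}$ be the integral operator with kernel $\widetilde{K}$ on $L^2(\widetilde{\omega})$. By hypothesis $\widetilde{K}$ is a genuine quasi-metric kernel on $(\Omega,\widetilde{\omega})$ with quasi-metric constant $\kappa$, so Theorem~\ref{keythm} applies to $\widetilde{K}$, provided (for the upper bound) that $\|\widetilde{T}\|_{L^2(\widetilde{\omega})}<1$.

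First I would record the straightforward properties (a)--(e) listed above the corollary. The key identity is (a): a direct induction on $j$, using the substitution $z \mapsto z$ with $d\widetilde{\omega}(z) = m(z)^2 d\omega(z)$, gives
\[
\widetilde{K}_j(x,y) = \int_\Omega \widetilde{K}_{j-1}(x,z)\widetilde{K}(z,y)\, d\widetilde{\omega}(z) = \frac{K_j(x,y)}{m(x)m(y)}.
\]
Property (e) follows from the isometry in (b): if $\widetilde{f}=f/m$, then $\|\widetilde{f}\|_{L^2(\widetilde{\omega})} = \|f\|_{L^2(\omega)}$, and (c) gives $\widetilde{T}\widetilde{f} = (Tf)/m$, so $\|\widetilde{T}\widetilde{f}\|_{L^2(\widetilde{\omega})} = \|Tf\|_{L^2(\omega)}$. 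Consequently the norm hypothesis $\|T\|_{L^2(\omega)}<1$ translates to $\|\widetilde{T}\|_{L^2(\widetilde{\omega})}<1$.

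Next I would observe the crucial cancellation: by (a),
\[
\frac{\widetilde{K}_2(x,y)}{\widetilde{K}(x,y)} = \frac{K_2(x,y)/(m(x)m(y))}{K(x,y)/(m(x)m(y))} = \frac{K_2(x,y)}{K(x,y)},
\]
so the exponent in the conclusion of Theorem~\ref{keythm} applied to $\widetilde{K}$ is exactly the exponent appearing in the statement of the corollary. Applying Theorem~\ref{keythm} to $\widetilde{K}$ yields, with constants $c=c(\kappa)$ and $C=C(\kappa,\|\widetilde{T}\|)=C(\kappa,\|T\|)$,
\[
\widetilde{K}(x,y)\,e^{c\,K_2(x,y)/K(x,y)} \;\leq\; \sum_{j=1}^\infty \widetilde{K}_j(x,y) \;\leq\; \widetilde{K}(x,y)\,e^{C\,K_2(x,y)/K(x,y)}.
\]
Multiplying through by $m(x)m(y)$ and invoking (a) once more converts each $\widetilde{K}_j$ back to $K_j$ and the prefactor $\widetilde{K}$ back to $K$, giving the desired two-sided estimate for $\sum_j K_j(x,y)$.

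There is no real obstacle: once the reduction to $\widetilde{K}$ is set up, the proof is purely bookkeeping, and the only point that requires a sentence of justification is the norm identity (e), which follows from the isometry $f \mapsto f/m$ between $L^2(\omega)$ and $L^2(\widetilde{\omega})$ combined with the intertwining relation $\widetilde{T}(f/m) = (Tf)/m$ from (c).
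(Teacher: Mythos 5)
Your proposal is correct and matches the paper's own argument: the paper proves the corollary by exactly this reduction, applying Theorem \ref{keythm} to $\widetilde{K}$ and $\widetilde{T}$ on $(\Omega,\widetilde{\omega})$ and translating back via the properties (a)--(e), whose verification (as you note) is routine bookkeeping. Your explicit check of (a), the norm identity (e) via the isometry $f\mapsto f/m$, and the cancellation $\widetilde{K}_2/\widetilde{K}=K_2/K$ fills in precisely the details the paper leaves to the reader.
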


Theorem~\ref{T1ests} becomes a statement concerning the minimal positive solution $u_0$, defined by $u_0 = m + \sum_{j=1}^{\infty} T^j m$, of the equation $u = Tu +m$, where $m$
is the quasi-metric modifier. We obtain estimates for $u_0$ in the next corollary which is deduced by applying Theorem~\ref {T1ests} to $u_0/m = 1 + \sum_{j=1}^{\infty} \widetilde{T}^j 1$.   

\begin{Cor}\label{Cests}
Suppose $K$ is a quasi-metrically modifiable kernel with modifier
$m$ and constant $\kappa$ on $(\Omega, \omega)$.  Define $T$ by
(\ref{defT}). Then there exists $c>0$ depending only on $\kappa$ such that 
\begin{equation}
u_0  \geq  m e^{c (Tm) /m}. \label{T1lowbnd1}
\end{equation}
If $\Vert T \Vert < 1$,
then there exists $C>0$ depending only on $\kappa$ and $\Vert T \Vert$ such that
\begin{equation}
 u_0  \leq m e^{C (T m) /m}.  \label{T1upperbnd1}
\end{equation}
Moreover, $u_0 \approx m$ if and only if $||T||<1$, and $T m \le C  m$. 
\end{Cor}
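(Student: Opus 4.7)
The plan is to reduce Corollary~\ref{Cests} to Theorem~\ref{T1ests} applied to the quasi-metric kernel $\widetilde{K}(x,y) = K(x,y)/(m(x)m(y))$ and the associated operator $\widetilde{T}$ on $(\Omega, \widetilde{\omega})$ with $d\widetilde{\omega} = m^2\, d\omega$, using the dictionary (a)--(e) already listed.

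First I would set $v_0 = u_0/m$ and verify that $v_0$ is the minimal positive solution of $v = \widetilde{T}v + 1$. Indeed, by property~(d),
\[
v_0 = 1 + \sum_{j=1}^\infty \frac{T^j m}{m} = 1 + \sum_{j=1}^\infty \widetilde{T}^j 1,
\]
so $v_0$ is the Neumann series representation of the minimal solution for $\widetilde{T}$. Next, by unfolding definitions,
\[
\widetilde{T}1(x) = \int_\Omega \frac{K(x,y)}{m(x)m(y)}\, m(y)^2\, d\omega(y) = \frac{Tm(x)}{m(x)}.
\]

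Now I would apply Theorem~\ref{T1ests} to the quasi-metric kernel $\widetilde{K}$ on $(\Omega, \widetilde{\omega})$. The unconditional lower estimate gives $v_0 \geq e^{c \widetilde{T}1}$ with $c = c(\kappa)$, and, using (e), the hypothesis $\|T\| < 1$ is equivalent to $\|\widetilde{T}\|_{L^2(\widetilde{\omega})} < 1$, which yields the upper estimate $v_0 \leq e^{C\widetilde{T}1}$ with $C = C(\kappa, \|T\|)$. Multiplying through by $m$ and substituting $\widetilde{T}1 = Tm/m$ produces (\ref{T1lowbnd1}) and (\ref{T1upperbnd1}) respectively.

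For the final equivalence, I observe that since $u_0 \geq m$ always, the condition $u_0 \approx m$ is the same as $\sup_\Omega v_0 < \infty$. Applying Theorem~\ref{charVeq} to $\widetilde{K}$, this is equivalent to $\|\widetilde{T}\| < 1$ together with $\sup_\Omega \widetilde{T}1 < \infty$, which in the original variables reads $\|T\| < 1$ together with $Tm \leq Cm$ for some constant $C$. No genuine obstacle appears; the only care needed is the bookkeeping of the change of measure and of the operator under the modifier $m$, all of which is already recorded in properties (a)--(e).
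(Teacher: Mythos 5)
Your proposal is correct and follows essentially the same route as the paper: the paper deduces the corollary by applying Theorem \ref{T1ests} to $u_0/m = 1 + \sum_{j=1}^{\infty}\widetilde{T}^j 1$ using the modifier dictionary (a)--(e), and the final equivalence is exactly the reduction of Theorem \ref{charVeq} (equivalently Corollary \ref{charVequivK}) to $\widetilde{K}$, $\widetilde{T}$ that you describe. The bookkeeping identities $\widetilde{T}1 = Tm/m$ and $\Vert\widetilde{T}\Vert_{L^2(\widetilde{\omega})} = \Vert T\Vert_{L^2(\omega)}$ are precisely the properties (d) and (e) the paper invokes.
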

\vspace{0.1in}

The next corollary is a direct analogue of Theorem~\ref{charVeq} 
for quasi-metrically modifiable kernels, proved by reducing to the quasi-metric case via (a) - (e) above.

\begin{Cor}\label{charVequivK}

Let $(\Omega, \omega)$ be a $\sigma$-finite measure space. Suppose
$K: \Omega \times \Omega \rightarrow (0, + \infty]$ is
quasi-metrically modifiable with constant $\kappa$ and modifier
$m$, and that $K$ is not identically $\infty$. Let $T$ be the integral operator corresponding to $K$ and let $u_0$ be the minimal positive solution of the equation $u=Tu +m$. Then the following statements are equivalent:

\vspace{0.1in}

(a)  There exists $C_1>0$ such that $\sum_{j=1}^{\infty} K_j (x,y) \leq C_1 K(x,y)$ for all $x,y \in \Omega$. 

\vspace{0.1in}

(b)  $\Vert T \Vert <1$ and $K_2 (x,y) \leq C_2 K(x,y)$ for all $x,y \in \Omega$, for some $C_2>0$ (or, equivalently, $\sup_{x \in \Omega} (Tm(x))/m(x) < +\infty$).

\vspace{0.1in}

(c)  $\sup_{x \in \Omega} (u_0(x)/m(x))< +\infty$. 

\end{Cor}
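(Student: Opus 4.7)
The plan is to reduce everything to the quasi-metric case by conjugating by the modifier $m$ and applying Theorem~\ref{charVeq} to $\widetilde{K}$ and $\widetilde{T}$ on $(\Omega, \widetilde{\omega})$, where $d\widetilde{\omega}=m^2 d\omega$. First I would check that the minimal positive solution $\widetilde{u}_0$ of the ``tilded'' equation $\widetilde{u}=\widetilde{T}\widetilde{u}+1$ is related to $u_0$ by $\widetilde{u}_0=u_0/m$. Indeed, from properties (c)--(d), $\widetilde{u}_0 = 1+\sum_{j\ge 1}\widetilde{T}^j 1 = 1+\sum_{j\ge 1} (T^j m)/m = (m+\sum_{j\ge 1} T^j m)/m = u_0/m$, and the termwise identification guarantees minimality. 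This is the only point that needs a little care, since ``minimal positive'' does not obviously survive under an arbitrary change of variables, but here it does because the correspondence is given by the absolutely convergent Neumann expansion term by term.

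Next I would translate the three conditions one at a time. Using property (a), dividing the inequality in (a) by $m(x)m(y)$ transforms it into the pointwise bound $\sum_{j=1}^\infty \widetilde{K}_j(x,y)\le C_1 \widetilde{K}(x,y)$, so (a) for $K$ is literally (a) for $\widetilde{K}$. For (b), the same division shows $K_2\le C_2 K$ is equivalent to $\widetilde{K}_2\le C_2 \widetilde{K}$, while property (e) gives $\Vert \widetilde{T}\Vert_{L^2(\widetilde{\omega})}=\Vert T\Vert_{L^2(\omega)}$, and property (d) with $j=1$ yields $\widetilde{T}1=(Tm)/m$, so $\sup_x \widetilde{T}1(x)<\infty$ iff $\sup_x (Tm(x))/m(x)<\infty$. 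Hence (b) for $(K,T)$ is (b) for $(\widetilde{K},\widetilde{T})$ verbatim. Finally, (c) for $u_0$ and $m$ is exactly $\sup_x \widetilde{u}_0(x)<\infty$, i.e.\ (c) for $\widetilde{u}_0$.

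Once these dictionaries are in place, I would invoke Theorem~\ref{charVeq} applied to the quasi-metric kernel $\widetilde{K}$ with quasi-metric constant $\kappa$ on $(\Omega, \widetilde{\omega})$ to conclude that (a), (b), and (c) are equivalent. The equivalent ``geometric'' form of boundedness of $Tm/m$ stated in parenthesis in (b) is inherited from the analogous statement in Theorem~\ref{charVeq} for $\widetilde{T}1$. I do not foresee any real obstacle beyond bookkeeping; the only mildly subtle point is the identification $\widetilde{u}_0=u_0/m$ for the \emph{minimal} positive solutions, which as noted above follows immediately by comparing the two convergent series term by term via property (d).
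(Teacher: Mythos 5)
Your proposal is correct and follows exactly the route the paper takes: the paper proves Corollary \ref{charVequivK} precisely by reducing to the quasi-metric case via properties (a)--(e) and invoking Theorem \ref{charVeq} for $\widetilde{K}$, $\widetilde{T}$ on $(\Omega,\widetilde{\omega})$. Your extra care in identifying $\widetilde{u}_0=u_0/m$ through the termwise Neumann series is a sound (and welcome) elaboration of a step the paper leaves implicit.
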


\vspace{0.1in}

In conclusion of this section we discuss an intrinsic characterization of 
the class of quasi-metrically modifiable kernels.  Recall that a  positive symmetric 
kernel $K$ is quasi-metrically modifiable with modifier $m>0$ if and only 
if $d(x,y) = m(x)m(y) D (x,y)$, where $ D (x,y) = 1/K(x,y)$,  
is a quasi-metric.  
Then $D$ satisfies the Ptolemy inequality:
\[ D(y_1, y_3) D(y_2, y_4) \leq 4 \kappa^2 \left( D(y_1, y_2) D(y_3, y_4) + D(y_1, y_4) D(y_2, y_3)   \right)  \]
for all $y_1, y_2, y_3, y_4 \in \Omega$.  Indeed, by Lemma \ref{PtolemyI}, we have such an inequality for $d$.  Using the relation between $d$ and $D$ and cancelling the term $m(y_1) m(y_2)m(y_3)m(y_4)$ yields the Ptolemy inequality for $D$.

On the other hand, suppose $K$ is a postive, symmetric kernel with the property that for some $w \in \Omega$, $K(x,w) < \infty$ for all $x$, such that $D=1/K$ satisfies the Ptolemy inequality 
\[ D(x,y) D(z,w) \leq C \left( D(x,z) D(y,w) + D(x,w) D(y,z)  \right) \]
for all $x,y,z \in \Omega$.  Then dividing by $D(x,w)D(y,w)D(z,w)$ yields
\[ \frac{D(x,y)}{ D(x,w)D(y,w) } \leq C \left(  \frac{D(x,z)}{ D(x,w)D(z,w) } + \frac{D(y,z)}{D(y,w) D(z,w) }  \right).  \]
Hence $m(x) = 1/D(x,w)= K(x,w)$ is a quasi-metric modifier for $K$. More generally, if $\omega(\{x: K(x,w) = \infty \}) =0$, then $\{x: K(x,w) = \infty \}$ can be deleted from $\Omega$ without significant effect, and the contrary case is somewhat degenerate. Such observations about quasi-metric modifiers were first noticed by Hansen and Netuka \cite{HN} (Proposition 8.1).

\bgp


\begin{thebibliography}{BCMR}

  
 
%

%
%

\bibitem[AS]{AS} {\sc M. Aizenman and B. Simon,}
{\em Brownian motion and Harnack inequality for Schr\"odinger
operators}, Comm. Pure Appl. Math. {\bf 35} (1982),  209--273.


\bibitem[An]{An2} {\sc A. Ancona,} {\em Some results and examples about
the behaviour of harmonic functions and Green's functions with
respect to second order elliptic operators,} Nagoya Math. J. {\bf
165} (2002), 123--158.




%
%

%
%

%
%

\bibitem[BBK]{BBK} {\sc K. Bogdan, T. Byczkowski, T. Kulczycki, M. Ryznar, R. Song, and Z. Vondracek},  {\em Potential Analysis of Stable Processes and its Extensions}, Lecture Notes Math. 
{\bf 1980}, Springer-Verlag, Berlin, 2009.


%
%

%
%

%
%

\bibitem[CZ]{CZ} {\sc K. L. Chung and Z. Zhao,} {\em From Brownian Motion to Schr\"odinger's Equation}, Grundlehren der
math. Wissenschaften {\bf 312}, Springer,
Berin-Heidelberg, 1995.


%
%

\bibitem[FV1]{FV1} {\sc M. Frazier and I. E. Verbitsky,} {\em 
Solvability conditions for a discrete model of Schr\"{o}dinger's 
equation,} 
Analysis, Partial Differential Equations and Applications, 
 {\it Operator Theory: Adv. Appl.} {\bf 193}  (2009)  Birkh\"{a}user, Basel, 65--80. 

\bibitem[FV2]{FV} {\sc M. Frazier and I. E. Verbitsky,} {\em Global Green's function estimates,} Around the Research of Vladimir Maz'ya III, Analysis and Applications, Ari Laptev, ed., International Mathematics Series {\bf 13} (2010), 105-152.


%
%

\bibitem[GH]{GH} {\sc A. Grigor'yan and W. Hansen,}
{\em Lower estimates for a perturbed Green function,}
J. d'Anal. Math.  {\bf 104}  (2008), 25--58.

\bibitem[H]{H} {\sc W. Hansen,} {\em Uniform boundary Harnack principle
and generalized triangle property},  J. Funct. Anal. {\bf 226}
 (2005), 452--484.

\bibitem[HN]{HN} {\sc W. Hansen and I. Netuka,} {\em  On the Picard principle for $\Delta + \mu$}, Math. Z. {\bf 270} (2012), 783--807. 



%
%

%
%


\bibitem[Hei]{Hei}{\sc J. Heinonen,}
{\em Lectures on Analysis on Metric Spaces,} Universitext,
Springer-Verlag, New York, 2001.


\bibitem[JV1]{JV1} {\sc B. J.~Jaye and I. E.~Verbitsky}, {\em The fundamental
solution of nonlinear operators with natural growth terms,} Ann. Sc. Norm. Super.
Pisa Cl. Sci. (5) {\bf 12} (2013), 93--139.

\bibitem[JV2]{JV2} {\sc B. J.~Jaye and I. E. ~Verbitsky}, {\em Local and global
behaviour of solutions
to nonlinear equations with natural growth terms,} Arch. Rational Mech. Anal. {\bf
204} (2012), 627--681. 


\bibitem[KV]{KV} {\sc N. J. Kalton and I. E. Verbitsky,} {\em Nonlinear equations
and weighted norm inequalities}, Trans. Amer. Math. Soc. {\bf 351}
(1999), 3441--3497.

\bibitem[K]{K} {\sc C. E. Kenig,} {\em Harmonic Analysis Techniques for Second Order Elliptic Boundary Value Problems,} CBMS Regional Conf. Series in Math. {\bf 83}, Amer. Math. Soc., 1994.


\bibitem[L]{L} {\sc N. S. Landkof,} {\em Foundations of Modern Potential Theory}, Grundlehren der math. Wissenschaften, 
{\bf  180}, Springer-Verlag, New York--Heidelberg, 1972.



%
%

%
%

%
%


\bibitem[NTV]{NTV} {\sc  F. Nazarov, S. Treil, and A. Volberg,} {\em Bellman function in stochastic control and harmonic analysis,}  Systems, approximation, singular integral operators, and related topics, Oper. Theory Adv. Appl. {\bf 129} (2001) Birkh\"auser, Basel, 393-423. 


%
%


%
%

%
%


%
%

%
%

\end{thebibliography}
\end{document}